\titleformat{\subsubsection}[runin]{\normalfont\large\bfseries}{\thesubsubsection}{1em}{}
\newtheorem{theorem}{Theorem}[section]
\newtheorem{definition}[theorem]{Definition}
\newtheorem{assumption}[theorem]{Assumption}
\newtheorem{lemma}[theorem]{Lemma}
\newtheorem{remark}[theorem]{Remark}
\newtheorem{corollary}[theorem]{Corollary}
\newtheorem{proposition}[theorem]{Proposition}  
\newcommand{\real}{{\mathbb{R}}}
\newcommand{\Tr}{\mathrm{Tr}}
\tikzset{main node/.style={circle,fill=blue!20,draw,inner sep=1pt},}
\definecolor{BBlue}{cmyk}{.98,0.10,0,.25}
\begin{document}
	
\title{\rule{\textwidth}{0.4mm} {\textsc{An Iterative Riccati Algorithm for Online Linear Quadratic Control}}\\
\rule{\textwidth}{0.4mm}}
\date{}
\maketitle
	
\vspace{-2cm}
\begin{flushright}
{\footnotesize {{\bf Mohammad Akbari}}\footnote{ Department of Mathematics and Statistics at Queen's University, \texttt{13mav1@queensu.ca}. 
}\hspace{1cm}}\\
{\footnotesize {{\bf Bahman Gharesifard}}\footnote{ Department of Mathematics and Statistics at Queen's University, \texttt{bahman.gharesifard@queensu.ca}. 
}\hspace{1cm}}\\
{\footnotesize {{\bf Tamas Linder}}\footnote{ Department of Mathematics and Statistics at Queen's University, \texttt{tamas.linder@queensu.ca}. 
}\hspace{1cm}}
\end{flushright}  
\sloppy

\maketitle
\begin{abstract}
An online policy learning problem of linear control systems is studied. In this problem, the control system is known and linear, and a sequence of quadratic cost functions is revealed to the controller in hindsight, and the controller updates its policy to achieve a sublinear regret, similar to online optimization. A modified online Riccati algorithm is introduced that under some boundedness assumption leads to logarithmic regret bound. In particular, the logarithmic regret for the scalar case is achieved without boundedness assumption. Our algorithm, while achieving a better regret bound, also has reduced complexity compared to earlier algorithms which rely on solving semi-definite programs at each stage.
\end{abstract}

\section{Introduction}\label{sec:intro}

 Decision making based on predictions is a cornerstone of engineering, economy, and social sciences. Examples include 
portfolio selection \cite{AA-EH-SK-RES:06,HL-CW-KZ:18}, transportation and traffic control \cite{MP-NR:01}, power engineering \cite{JZ-YL-HC:16}, manufacturing and supply chain management, show promising achievements and have received considerable attention in recent years, see, e.g.,~\cite{OA-EH-SM-OS:13,SR-GG-DB:11,NCB-GL:06}. The problem setting that we address in this paper belongs to a class of decision making problems known as \emph{online optimization}. The literature on this subject is extremely rich and its connections to many other areas of learning have been explored in recent years~\cite{NCB-GL:06,EH:16,SSS:12,EH-AA-SK-07,EH-SK:14,EG-NC-CG-YM:13,AB-YM:07}. 

Unlike the general setting of online optimization, where the decisions of the learner are solely chosen according to a time-varying cost function, in many realistic scenarios the learner's decisions involve a \emph{control system}, where the decisions not only incur a cost at the present time, but also change the system state and incurs a cost at the future.
 Examples include power supply management in the presence of time-varying energy costs due to demand fluctuations and tracking of an adversarial target. In this problem setting, decisions are assumed to be a function of the current state which is referred to as a \emph{policy}. In order to assess the performance of policies over time, a regret is defined as the difference between the accumulated costs incurred by the control actions made in hindsight using previous states and the cost incurred by the best fixed admissible policy when all the cost functions are known in advance. Similar to online optimization, the objective is to design algorithms to generate policies which make the regret function grow sublinearly. Clearly if the cost functions were available to the decision maker, the problem discussed above would reduce to the classical optimal control problem.
 
The problem setting here is similar to the one studied in~\cite{AC-AH-TK-NL-YM-KT:18} where an online version of linear quadratic Gaussian control is introduced. 
In particular, in~\cite{AC-AH-TK-NL-YM-KT:18} an online gradient descent algorithm with a fixed learning rate is proposed, where in each iteration, a projection 
onto a bounded set of positive-definite matrices is taken, which itself relies on solving a semi-definite program. Under the assumptions that the underlying system is controllable, the cost functions are bounded, and the covariance of the disturbance is positive-definite, it is proved that the regret is sublinear, and grows as $\mathcal{O}(\sqrt{T})$, where $T$ is the time horizon. 
Other closely related works are~\cite{NA-BB-EH-SK-KS:19} and~\cite{NA-EH-KS:19}, where the cost functions are assumed to be general convex and globally Lipschitz functions. In contrast to \cite{AC-AH-TK-NL-YM-KT:18}, the noise assumed in~\cite{NA-BB-EH-SK-KS:19} is adversarial, and \cite{NA-EH-KS:19} achieves a regret bound of $\mathcal{O}((\log(T))^7)$. In these works, the generated control actions, which lead to a sublinear regret bound, are linear feedbacks which rely on a finite history of the past disturbances. Similarly, in another recent work~\cite{DF-MS:20} a fixed (known) system with adversarial disturbances and fixed (known) quadratic cost functions is assumed, and a regret bound of $\mathcal{O}((\log(T))^3)$ is achieved.

Here, we point out a wider set of literature related to our work. First, we note that one can think about the underlying control system as a dynamical constraint on the optimization problem. Considering control systems as constraints is also classical in the context of \emph{model predictive control}~\cite{CEG-DMP-MM:89}. Although we tackle dynamic constraints in this work, we should emphasize that online optimization problems with static constraints, known only in hindsight, also play a key role in various settings and have generated interest in recent years~\cite{HY-MN-XW:17,MJN-HY:17,RJ-JH-CA:16}.


Our work is also related to the framework of Markov decision processes (MDPs), where the system transition to the next state is defined through a probability distribution. Moreover, a reward is given to the decision maker for each action at each state. This framework is classical in \emph{reinforcement learning}, where the objective is to learn the optimal policy which yields the maximum reward~\cite{RS-AGB:18}. It is also worth pointing out that there is another key role that regret minimization has played recently, bringing learning and control theory together, in the context of robust control, adaptive control, and system identification. Here, the regret enters through the lack of perfect knowledge of the model, and research efforts focus on generating algorithms for updating models in a data-driven fashion~\cite{YY-ZG-HX-DD-YY-DCW:19,AK-CK:17}. Finally, our setting is also related to online optimization in dynamic environments~\cite{ECH-RMW:13}, where the decisions are constrained in dynamics chosen by the environment. However, the objective of~\cite{ECH-RMW:13} is to study the impact of model mismatch on the overall regret, whereas in this paper the decisions are input to a control system, which impacts the way the decisions affect the future outcomes through its dynamics. 

{\bf Contributions.} We consider the problem of online linear quadratic Gaussian optimal control,  where the control system is linear and known and the cost function is quadratic and time-varying and only becomes available in hindsight. In contrast to~\cite{AC-AH-TK-NL-YM-KT:18}, where an online algorithm using semi-definite programming update is designed to generate the control policies, we employ a control-theoretic approach and introduce an online version of a classical iterative Riccati update, known as the Newton-Hewer~\cite{GH:71} update. Using this update, which is less known than the classical Riccati difference equation~\cite{PEC-DQM:70}, is key in developing our algorithm. This algorithm, which employs only a few matrix addition and multiplication operations in each time step, has reduced complexity compared to the one using semi-definite programming in each time step and is easier to implement. Our main result is a $\mathcal{O}(\log{T})$ regret bound for the online linear quadratic Gaussian optimal control problem, improving the $\mathcal{O}(\sqrt{T})$ bound of~\cite{AC-AH-TK-NL-YM-KT:18} and the $\mathcal{O}((\log(T))^7)$ bound of~\cite{NA-EH-KS:19} for time horizon $T$, under some boundedness assumption. Indeed, the technical part of our result relies on characterizing the interplay between a notion of stability for the sequence of control policies and boundedness of the solutions of the proposed Riccati update. The latter boundedness property, which follows for the Riccati difference equation from monotonicity with respect to the underlying parameters, cannot be obtained using monotonicity; in fact, the Newton-Hewer updates can fail to be monotone in this setting~\cite{MA-BG-TL:20}.
This being said, for the scalar case, we are able to prove that boundedness can in fact be verified, yielding the stronger result that initializing the control policy to be stable is enough to guarantee boundedness of the solutions of the proposed online Riccati update.

{\bf Notation.} We let $\real$ denote the set of real numbers and $\real^{n \times m}$ denote the set of $n \times m$ real matrices. We use lowercase letters for vectors and uppercase letters for matrices. We denote by $\|\cdot\|$ the Euclidean norm on vectors and its corresponding operator norm on real matrices. We denote by $A^\top$ the transpose of matrix $A$. Thus $\|A\|=\sigma_{\max}(A)=\sqrt{\lambda_{\max}(A^\top A)}$, where $\sigma_{\max}(A)$ is the largest singular value of $A$ and $\lambda_{\max}(A^\top A)$ is the largest eigenvalue of $A^\top A$. Trace of matrix $A$ is denoted by $\Tr(A)$. If $A$ is an $n\times n$ real matrix with eigenvalues $\lambda_1,\ldots, \lambda_n$, then the spectral radius of $\rho(A)$ of $A$ is $\rho(A)=\max\{|\lambda_1|,\ldots,|\lambda_n|\}$. We use $A\succeq B$ to indicate that $A-B$ is positive semi-definite.

\section{Problem Formulation}

We start by describing the problem of online optimization for the class of linear control systems with quadratic cost. Let us recall this setting. 

\subsection{Discrete-Time Linear Quadratic Gaussian Control}
The discrete-time linear quadratic Gaussian (LQG) control problem is defined as follows, see for instance~\cite{ST:02}: Let $x_t\in\real^{n}$ and $u_t\in\real^{m}$ be the control state and the control action at time $t$, respectively, with initial state $x_1$. The system dynamics are given by 
\begin{align}\label{contsys}
	x_{t+1}=Ax_t+Bu_t+w_t, \qquad t\geq 1
\end{align}
where $A\in\real^{n\times n}$, $B\in\real^{n\times m}$, and $\{w_t\}_{t\geq 1}$ are i.i.d.~Gaussian noise vectors with zero mean and covariance $W\in\real^{n\times n}$ ($w_t \sim \mathcal{N}(0,W)$). It is assumed that the initial value is Gaussian $x_1\sim\mathcal{N}(m,X_1)$ and is independent of the noise sequence $\{w_t\}_{t\geq 1}$. 
The cost incurred in each time step $t$ is a quadratic function of the state and control action given by $x_t^\top Q_t x_t + u_t^\top R_t u_t$, where $Q_t\in \real^{n\times n}$ and $R_t\in\real^{m\times m}$ are positive-definite matrices. The total cost after $T$ time steps is given by
\[
J_T(x_1, u_1,\ldots,u_T)=\mathbb{E}\Big[x_T^\top Q_T x_T+\sum_{t=1}^{T-1}\big(x_t^\top Q_t x_t+u_t^\top R_t u_t\big)\Big].
\]
We consider controllers of the form $u_t=\pi_t (x_t)$, where the function $\pi_t:\real^n \to \real^m$ is called a policy. This assumption does not restrict generality, as the optimal policy will provably be of this form~\cite{ST:02}. It is well-known that under the assumption that the control system is stabilizable, and the cost matrices $Q_t$ and $R_t$ are positive-definite, the optimal policy is a stable linear feedback of the state, which will be described next. 

\subsection{Discrete Algebraic Riccati Equation}
In the classical LQG problem, where all the cost functions are known, the optimal policy can be obtained by dynamic programming, and is a linear function of the state. In particular, $u_t=-K_t x_t$, where $K_t$ is given by the equation
\[
K_t=(B^\top P_{t+1}B+R_{t})^{-1}B^\top P_{t+1}A,
\]
and $P_{t+1}$ is a sequence of positive-definite matrices obtained iteratively, backwards in time, from the dynamic Riccati equation:
\begin{equation}\label{fdare}
P_{t}=A^\top P_{t+1} A-A^\top P_{t+1}B(B^\top P_{t+1}B+R_t)^{-1}B^\top P_{t+1} A +Q_t
\end{equation}
with the terminal condition $P_T=Q_T$. 

For the infinite-horizon problem with the assumption that $Q_t=Q$ and $R_t=R$ are fixed, and under the  assumptions that
\begin{enumerate}
\item $R$ is positive-definite
\item $(A,B)$ is stabilizable, i.e., there exists a linear policy $\pi(x)=-Kx$ such that the closed-loop system $x_{t+1}=(A-BK)x_t$ is asymptotically stable: $\rho(A-BK)<1$,
\item $(A,C)$ is detectable where $Q=C^\top C$, [i.e., if $u_t\rightarrow 0$ and $Cx_t\rightarrow 0$ then, $x_t\rightarrow 0$],
\end{enumerate}
it is well-known that the optimal policy is unique, time invariant, and is a linear function of the state~\cite{DPB:18}, i.e., $u_t=-K^\star x_t$. Here $K^\star$ is given by
\begin{align}
K^\star&=(B^\top P^\star B+R)^{-1}B^\top P^\star A,
\end{align}
where $P^\star$ satisfies the discrete algebraic Riccati equation (DARE):
\begin{align}\label{dare}
P^\star&=A^\top P^\star A-A^\top P^\star B(B^\top P^\star B+R)^{-1}B^\top P^\star A +Q.
\end{align} 
Moreover, $P_t$ given by \eqref{fdare} converges to $P^\star$ as $t\to\infty$~\cite{ST:02}.
By using the policy $K^\star$, 
we have that $x_{t+1}=(A-BK^\star)x_t+w_t$. The optimal policy $K^\star$ 
is guaranteed to be stable i.e. $\rho(A-BK^\star)<1$. Here, $x_t$ converges to a stationary distribution, i.e., $x_t$ converges weakly to a random variable $x$ which has the same distribution as $(A-BK^\star)x+w_t$, so that we have $\mathbb{E}[x]=\mathbb{E}[(A-BK^\star)x+w_t]$, which implies $\mathbb{E}[x]=0$, and the covariance matrix $X=\mathbb{E}[x x^\top]$ satisfies $X=(A-BK^\star)X(A-BK^\star)^\top+W$, see e.g.,~\cite{AC-AH-TK-NL-YM-KT:18}. 

\subsection{Problem Setting}
We now define the problem we study in this work, following~\cite{AC-AH-TK-NL-YM-KT:18}. In \emph{online linear quadratic control}, the sequence of cost matrices $\{Q_t\}_{t\geq 1}$ and $\{R_t\}_{t\geq 1}$ are not known in advance and $Q_t$ and $R_t$ are only revealed after choosing the control action $u_t$. Since it is not possible to find the optimal policy before observing the whole sequence of cost matrices $\{Q_t\}_{t\geq 1}$ and $\{R_t\}_{t\geq 1}$, the decision maker faces a \emph{regret}. Here, we assume that the control system $(A,B)$ is stabilizable, and the cost matrices $Q_t$ and $R_t$ are positive-definite and uniformly bounded over $t\geq 1$. As the optimal policy for the system with these assumptions is given by a stable linear feedback, we use the set of stable linear feedback functions as the set of admissible policies. 

Let $x_t\in\real^n$ and $u_t\in\real^m$ be the control state and controller action at time $t\geq 1$. The controller uses a linear feedback policy $u_t=-K_t x_t$ and commits to this action after observing $x_t$. Then the controller receives the positive-definite matrices $Q_t\in \real^{n\times n}$ and $R_t\in\real^{m\times m}$, and suffers the cost 
\begin{equation}
J_t(K_t)=\mathbb{E}\Big[x_t^\top Q_t x_t + u_t^\top R_t u_t\Big].
\end{equation}
The objective is to design an algorithm 
to generate a sequence of policies $\{K_t\}_{t\geq 1}$ such that the regret function, which is defined as 
\begin{equation}\label{eq:regret-def}
R(T)=\sum_{t=1}^T J_t(K_t)-\min_{K\in\mathcal{K}}\sum_{t=1}^T J_t(K),
\end{equation}
where $\mathcal{K}$ is the set of stable policies, grows sublinearly in $T$. In other words, the average regret over time converges to zero. Before stating our main results, we provide a brief review of the iterative Riccati updates that we employ to design our main algorithm. 

\subsection{Iterative Methods for Solving the Discrete Algebraic Riccati Equation}\label{background}
Several methods for solving DARE exist in the literature, including iterative methods  \cite{PEC-DQM:70}, algebraic methods  \cite{LR-PL:95}, and semi-definite programming  \cite{VB-LV:03}. Our work is based on iterative methods, and in particular, two techniques that we review here. The first is given in \cite{PEC-DQM:70},
where one runs the recursion 
\[
P_{t+1}=A^\top P_{t} A-A^\top P_{t}B(B^\top P_{t}B+R)^{-1}B^\top P_{t} A +Q.
\] 
It is shown that under the assumption that $(A,B)$ is stabilizable and $(A,C)$ is detectable, where $Q=C^\top C$, the sequence $\{P_t\}$ converges to the unique solution of DARE.

A second approach, studied by \cite{GH:71}, uses the following idea: Let $P_t$ be the solution of the equation
\begin{align}\label{Halg}
P_{t}=(A-BK_t)^\top P_t (A-BK_t)+K_t^\top R K_t +Q,
\end{align}
where
\begin{align*}
K_t=(B^\top P_{t-1}B+R)^{-1}B^\top P_{t-1}A,
\end{align*}
starting from a stable policy $K_1$. Then under the assumption that $(A,B)$ is stabilizable and $(A,C)$ is detectable, where $Q=C^\top C$, the sequence $\{P_t\}$ converges to the solution of DARE and the rate of convergence is quadratic, i.e.,
\begin{equation*}
\|P_t-P^\star\|\leq C\|P_{t-1}-P^\star\|^2
\end{equation*}
where $C>0$ is a constant. In what follows, we modify this algorithm and use it for the online linear quadratic Gaussian problem. 
We present our algorithm after reviewing some salient properties of stable policies. Similar to~\cite{AC-AH-TK-NL-YM-KT:18}, we use the notion of \emph{strong stability}, which allows us to analyze the rate of convergence of the state covariance matrices under our proposed algorithm.

\section{Strong Stability}
A key property that we require before introducing our algorithm is the notion of strong stability and sequential strong stability which are similar to the ones in~\cite{AC-AH-TK-NL-YM-KT:18}. The notion of strong stability is defined as follows.
\begin{definition}\label{defstab}  
A policy $K$ is called stable if $\rho(A-BK)<1$. A policy $K$ is $(\kappa, \gamma)$-strongly stable (for $\kappa>0$ and $0<\gamma\leq 1)$ if $\|K\|\leq \kappa$, and there exist matrices $L$ and $H$ such that $A-BK=HLH^{-1}$, with $\|L\|\leq 1-\gamma$ and $\|H\|\|H^{-1}\|\le \kappa$.
\end{definition}

Note that every $(\kappa,\gamma)$-strongly stable policy $K$ is stable, since the matrices $A-BK$ and $L$ are similar and hence $\rho(A-BK)=\rho(L)\leq (1-\gamma)$. Lemma~\ref{lemma:Co1} shows that every stable policy is $(\kappa,\gamma)$-strongly stable for some $\kappa>0$ and $0<\gamma\leq 1$.

\begin{lemma}\cite[Lemma B.1.]{AC-AH-TK-NL-YM-KT:18}\label{lemma:Co1}
Suppose that for a linear system defined by $A$, $B$, a policy $K$ is stable. Then there are parameters $\kappa>0$, $0<\gamma\leq 1$ for which it is $(\kappa,\gamma)$-strongly stable.
\end{lemma}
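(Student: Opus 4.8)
The plan is to reduce the statement to the classical fact that a real matrix whose spectral radius is strictly less than one can be brought, by a similarity transformation, to a matrix of operator norm strictly less than one. Write $M := A-BK$; stability of $K$ means precisely $\rho(M)<1$. I would fix any $\gamma$ with $0<\gamma<1-\rho(M)$, for instance $\gamma=(1-\rho(M))/2$, so that $0<\gamma\le 1$ and the rescaled matrix $M/(1-\gamma)$ still has spectral radius $\rho(M)/(1-\gamma)<1$. It then suffices to produce an invertible $H$ and set $L:=H^{-1}MH$ with $\|L\|\le 1-\gamma$; taking $\kappa:=\max\{\|K\|,\,\|H\|\|H^{-1}\|\}$ then satisfies all the requirements of Definition~\ref{defstab} simultaneously.

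To build $H$ I would use a Lyapunov argument, which has the advantage of producing a real, symmetric transformation and of avoiding the Jordan form. Let $N:=M/(1-\gamma)$, so $\rho(N)<1$. Since $\|N^k\|\to 0$ geometrically, the series $P:=\sum_{k=0}^\infty (N^\top)^k N^k$ converges to a symmetric positive-definite matrix, and a telescoping computation shows it solves the discrete Lyapunov equation $N^\top P N - P = -I$; in particular $N^\top P N = P-I\preceq P$. Setting $H:=P^{-1/2}$ (so $H^{-1}=P^{1/2}$) and $L:=H^{-1}MH=P^{1/2}MP^{-1/2}$, I would compute
\[
L^\top L = P^{-1/2}M^\top P M P^{-1/2} = (1-\gamma)^2 P^{-1/2}\big(N^\top P N\big)P^{-1/2} \preceq (1-\gamma)^2 P^{-1/2}PP^{-1/2} = (1-\gamma)^2 I,
\]
using $M=(1-\gamma)N$ and $N^\top P N\preceq P$. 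Hence $\|L\|^2=\|L^\top L\|\le(1-\gamma)^2$, giving $\|L\|\le 1-\gamma$ as required, while $\|H\|\|H^{-1}\|=\sqrt{\lambda_{\max}(P)/\lambda_{\min}(P)}$ is a finite condition number.

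The only delicate point is the existence and positive-definiteness of $P$, which rests on $\rho(N)<1$: via Gelfand's formula $\rho(N)=\lim_k\|N^k\|^{1/k}$ one gets geometric decay of $\|N^k\|$ and hence absolute convergence of the defining series, and positive-definiteness is immediate since $P\succeq (N^\top)^0N^0=I$. I expect this to be the main (though standard) obstacle, together with verifying the Lyapunov identity through the telescoping cancellation $N^\top P N=\sum_{k\ge 1}(N^\top)^k N^k=P-I$. An alternative route avoiding Lyapunov would invoke the classical result that for every $\epsilon>0$ there is an invertible $H$ with $\|H^{-1}MH\|\le\rho(M)+\epsilon$, proved through the (real) Schur or Jordan decomposition followed by a block-diagonal rescaling that shrinks the off-diagonal coupling; there the subtlety is keeping $H$ real while handling complex-conjugate eigenvalue pairs, which the Lyapunov construction sidesteps entirely.
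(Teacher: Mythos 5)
Your proof is correct and complete. One thing to note: the paper does not actually supply a proof of Lemma~\ref{lemma:Co1} at all; it defers entirely to \cite[Lemma B.1]{AC-AH-TK-NL-YM-KT:18}, so your self-contained argument is a genuine addition rather than a rederivation of anything in the text. Your argument is the standard discrete-Lyapunov construction, and every step checks out: for $N=M/(1-\gamma)$ with $\gamma=(1-\rho(M))/2$ one has $0<\gamma\le 1$ and $\rho(N)<1$, so by Gelfand's formula the series $P=\sum_{k\ge 0}(N^\top)^k N^k$ converges with $P\succeq I$; the telescoping identity $N^\top P N=P-I\preceq P$ holds; and with $H=P^{-1/2}$ and $L=P^{1/2}MP^{-1/2}$ one indeed gets $M=HLH^{-1}$, $L^\top L\preceq(1-\gamma)^2 I$ hence $\|L\|\le 1-\gamma$, and $\|H\|\,\|H^{-1}\|=\sqrt{\lambda_{\max}(P)/\lambda_{\min}(P)}<\infty$, so $\kappa=\max\{\|K\|,\|H\|\,\|H^{-1}\|\}$ (which is at least $1$, hence positive) meets every requirement of Definition~\ref{defstab}. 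Compared with the alternative you mention (Jordan or real-Schur form followed by diagonal rescaling to get $\|H^{-1}MH\|\le\rho(M)+\epsilon$), the Lyapunov route keeps $H$ real, symmetric, and positive definite, so no care is needed with complex-conjugate eigenvalue pairs; it is also stylistically consistent with the rest of the paper, which uses exactly this type of similarity elsewhere (in the proofs of Propositions~\ref{str-stability} and~\ref{lem6}, the matrix $L_t=P_t^{1/2}(A-BK_t)P_t^{-1/2}$ plays the same role, with the positive-definite solution of a fixed-point equation serving as the square-rooted transformation).
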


We refer the reader to \cite[Lemma B.1]{AC-AH-TK-NL-YM-KT:18} for a proof of this lemma. 

Under the assumption of $(\kappa, \gamma)$-strong stability of policy $K$, the state covariance matrices $X_t=\mathbb{E}[x_t x_t^\top]$ converge exponentially to a steady-state covariance matrix $\widehat{X}$, which satisfies \[\widehat{X}=(A-BK)\widehat{X}(A-BK)^\top +W.\]
Lemma~\ref{stabconv} provides the details. 

\begin{lemma}\cite[Lemma 3.2]{AC-AH-TK-NL-YM-KT:18}\label{stabconv}
Let the pair $(A,B)$ be stabilizable, 
and assume the controller uses a fixed $(\kappa,\gamma)$-strongly stable policy $K$, i.e., for $t\geq 1$, we have $u_t=-Kx_t$. Let $X_t$ be the covariance matrix of $x_t$. 
Then the sequence $\{X_t\}_{t\geq 1}$ 
converges to the steady-state covariance matrix $\widehat{X}$, and in particular, for any $t\geq 1$,
\[
\|X_{t+1}-\widehat{X}\|\leq \kappa^2 e^{-2\gamma t}\|X_1-\widehat{X}\|.
\]
\end{lemma}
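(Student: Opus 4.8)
The plan is to reduce the covariance dynamics to a linear matrix recursion and then exploit the strong-stability decomposition of the closed-loop matrix to extract the geometric decay. Write $A_K := A - BK$ for the closed-loop matrix, so that under the fixed policy $u_t = -Kx_t$ the state evolves as $x_{t+1} = A_K x_t + w_t$. Since $w_t$ is zero-mean and independent of $x_t$ (the latter being a function of $x_1, w_1, \dots, w_{t-1}$ only), expanding $X_{t+1} = \mathbb{E}[x_{t+1}x_{t+1}^\top]$ makes the cross terms $\mathbb{E}[A_K x_t w_t^\top]$ and its transpose vanish, leaving the recursion
\[
X_{t+1} = A_K X_t A_K^\top + W.
\]
The same identity holds verbatim for the centered covariance, so the possibly nonzero mean of $x_1$ causes no difficulty.

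Next I would pin down the fixed point. Because $K$ is $(\kappa,\gamma)$-strongly stable we have $\rho(A_K) < 1$, so the discrete Lyapunov equation $\widehat{X} = A_K \widehat{X} A_K^\top + W$ admits the unique solution $\widehat{X} = \sum_{k \geq 0} A_K^k W (A_K^\top)^k$, the series converging geometrically by the power bound derived below. Subtracting the Lyapunov equation from the recursion cancels $W$ and yields the homogeneous error dynamics
\[
X_{t+1} - \widehat{X} = A_K\,(X_t - \widehat{X})\,A_K^\top,
\]
which unrolls by induction to $X_{t+1} - \widehat{X} = A_K^{t}\,(X_1 - \widehat{X})\,(A_K^\top)^{t}$.

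The decay then follows from submultiplicativity of the operator norm together with the strong-stability structure. Using the factorization $A_K = HLH^{-1}$ with $\|L\| \le 1-\gamma$ and $\|H\|\,\|H^{-1}\| \le \kappa$ from Definition~\ref{defstab}, I would bound $\|A_K^{t}\| = \|H L^{t} H^{-1}\| \le \|H\|\,\|H^{-1}\|\,\|L\|^{t} \le \kappa (1-\gamma)^{t} \le \kappa e^{-\gamma t}$, where the last step uses $1-\gamma \le e^{-\gamma}$ together with $0 < \gamma \le 1$. Applying $\|\cdot\|$ to the unrolled error identity and using $\|(A_K^\top)^{t}\| = \|A_K^{t}\|$ gives
\[
\|X_{t+1} - \widehat{X}\| \le \|A_K^{t}\|^2\,\|X_1 - \widehat{X}\| \le \kappa^2 e^{-2\gamma t}\,\|X_1 - \widehat{X}\|,
\]
which is exactly the claimed bound; since the right-hand side tends to zero, this also establishes convergence to $\widehat{X}$.

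There is no deep obstacle here, as the argument is essentially a Lyapunov-equation computation. The one place requiring care — and the reason strong stability rather than mere stability ($\rho(A_K) < 1$) is invoked — is obtaining the explicit geometric bound $\|A_K^{t}\| \le \kappa(1-\gamma)^{t}$ with a constant uniform in $t$. A spectral radius strictly less than one guarantees only asymptotic decay of $\|A_K^{t}\|$ with an uncontrolled transient, whereas the similarity bound $\|H\|\,\|H^{-1}\| \le \kappa$ furnishes precisely the multiplicative constant $\kappa^2$ and exponential rate $e^{-2\gamma t}$ appearing in the statement.
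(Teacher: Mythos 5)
Your proof is correct, and it is essentially the argument of \cite[Lemma 3.2]{AC-AH-TK-NL-YM-KT:18}, to which the paper simply defers (it contains no proof of its own for this lemma): derive the Lyapunov error recursion $X_{t+1}-\widehat{X}=(A-BK)(X_t-\widehat{X})(A-BK)^\top$, unroll it, and bound $\|(A-BK)^t\|\leq \|H\|\,\|H^{-1}\|\,\|L\|^t\leq \kappa(1-\gamma)^t\leq \kappa e^{-\gamma t}$ via the strong-stability factorization of Definition~\ref{defstab}. Note also that your argument is exactly the fixed-policy specialization of the technique the paper does write out for Lemma~\ref{lem3}: there the policy, and hence $H_t$, varies with $t$, so one cannot take powers of a single matrix and must instead conjugate the error into $\Delta_t=H_t^{-1}(X_t-\widehat{X}_t)(H_t^{-1})^\top$ before iterating, which is what produces the weaker rate $e^{-2\gamma^2 t}$ and the extra drift terms $\eta_t$ in that statement.
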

We refer the reader to \cite[Lemma 3.2]{AC-AH-TK-NL-YM-KT:18} for a proof.

In order to obtain a similar result for the change of the state covariance matrices using a sequence of different $(\kappa,\gamma)$-strongly stable policies $\{K_t\}_{t\geq 1} $, we need to define a notion of sequential strong stability, which is presented next.

\begin{definition}\label{defstrstab}
A sequence of policies $\{K_t\}_{t\geq 1}$ is sequentially $(\kappa,\gamma)$-strongly stable, for $\kappa>0$ and $0<\gamma \leq 1$, if there exist sequences of matrices $\{H_t\}_{t\geq 1}$ and $\{L_t\}_{t\geq 1}$ such that 
\[
A-BK_t=H_tL_tH_t^{-1}
\] 
for all $t\geq 1$, with the following properties:
\begin{itemize}
\item $\|L_t\|\leq 1-\gamma$ and $\|K_t\|\leq \kappa$;

\item $\|H_t\|\leq \beta$ and $\|H_t^{-1}\|\leq 1/\alpha$ with $\kappa=\beta/\alpha$ and $\alpha>0$ and $\beta >0$;

\item $\|H_{t+1}^{-1}H_t\|\leq 1+\gamma$.
\end{itemize}
\end{definition}
The importance of this notion of stability is demonstrated in Lemma~\ref{lem3}.

\begin{lemma}\label{lem3}
Let the pair $(A,B)$ be stabilizable, and suppose that the controller uses $u_t=-K_t x_t$ for $ t\geq 1 $ and where $\{K_t\}_{t\geq 1} $ is sequentially $(\kappa,\gamma)$-strongly stable with $\kappa>0$ and $0<\gamma \leq 1$. For each $K_t$, let $\widehat{X}_t$ be the corresponding steady-state covariance matrix, i.e., $\widehat{X}_t$ satisfies $\widehat{X}_t= (A-BK_t)\widehat{X}_t(A-BK_t)^\top +W$ and assume that $\|\widehat{X}_{t+1}-\widehat{X}_{t}\|\leq \eta_t$ with $\eta_t>0$, for all $t\geq 1$.
Let ${X}_t$ be the corresponding state covariance matrix at time 
 $t$, starting from some initial ${X}_1\succeq 0$. 
Then for $t\geq 1$,
\[
\|X_{t+1}-\widehat{X}_{t+1}\|\leq \kappa^2e^{-2\gamma^2 t}\|X_{1}-\widehat{X}_{1}\|+\kappa^2\sum_{s=0}^{t-1}e^{-2\gamma^2 s}\eta_{t-s}.
\]
\end{lemma}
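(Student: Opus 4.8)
The plan is to turn the covariance dynamics into a linear recursion for the deviation $Y_t := X_t - \widehat{X}_t$ and then unroll it, controlling the resulting products of closed-loop matrices through the similarity factorization supplied by sequential strong stability. First I would record the covariance recursion: since $u_t=-K_t x_t$ gives $x_{t+1}=(A-BK_t)x_t+w_t$ with $w_t$ zero-mean and independent of $x_t$, writing $M_t:=A-BK_t$ the state covariance obeys $X_{t+1}=M_t X_t M_t^\top + W$. Subtracting the steady-state identity $\widehat{X}_t=M_t\widehat{X}_t M_t^\top+W$ cancels the constant $W$ and leaves the clean error recursion
\[
Y_{t+1}=M_t Y_t M_t^\top + E_t, \qquad E_t:=\widehat{X}_t-\widehat{X}_{t+1},
\]
where the hypothesis supplies $\|E_t\|\le\eta_t$.

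Next I would unroll this recursion. Introducing the transition products $\Phi_{t,s}:=M_t M_{t-1}\cdots M_s$ (with the convention $\Phi_{t,t+1}:=I$), a routine induction yields
\[
Y_{t+1}=\Phi_{t,1}\,Y_1\,\Phi_{t,1}^\top+\sum_{s=1}^{t}\Phi_{t,s+1}\,E_s\,\Phi_{t,s+1}^\top.
\]
Taking operator norms and using submultiplicativity, everything reduces to bounding $\|\Phi_{t,s}\|$, since $\|Y_1\|=\|X_1-\widehat{X}_1\|$ and $\|E_s\|\le\eta_s$.

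The heart of the argument, and the step I expect to be the main obstacle, is controlling these products. Here I would substitute $M_j=H_jL_jH_j^{-1}$ and regroup by a telescoping trick,
\[
\Phi_{t,s}=H_t\big(L_t(H_t^{-1}H_{t-1})L_{t-1}(H_{t-1}^{-1}H_{t-2})\cdots(H_{s+1}^{-1}H_s)L_s\big)H_s^{-1},
\]
so that the inner bracket interleaves the factors $L_j$ with exactly the coupling terms $H_j^{-1}H_{j-1}$ that sequential strong stability controls. Counting factors, $\Phi_{t,s}$ carries $t-s+1$ matrices $L_j$ (each of norm at most $1-\gamma$) and $t-s$ coupling terms (each of norm at most $1+\gamma$), flanked by $\|H_t\|\le\beta$ and $\|H_s^{-1}\|\le 1/\alpha$; with $\kappa=\beta/\alpha$ this gives
\[
\|\Phi_{t,s}\|\le\kappa(1-\gamma)^{t-s+1}(1+\gamma)^{t-s}=\frac{\kappa}{1+\gamma}(1-\gamma^2)^{t-s+1}\le\kappa\, e^{-\gamma^2(t-s+1)},
\]
using $1-\gamma^2\le e^{-\gamma^2}$, and hence $\|\Phi_{t,s}\|^2\le\kappa^2 e^{-2\gamma^2(t-s+1)}$. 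I want to flag that this is precisely where the decay rate degrades from $\gamma$, as in the fixed-policy Lemma~\ref{stabconv}, to $\gamma^2$: the $(1+\gamma)$ coupling terms, absent when the policy is held fixed, are what cost the extra power.

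Finally I would assemble the pieces. The homogeneous term contributes $\|\Phi_{t,1}\|^2\|Y_1\|\le\kappa^2 e^{-2\gamma^2 t}\|X_1-\widehat{X}_1\|$, while each summand contributes $\|\Phi_{t,s+1}\|^2\|E_s\|\le\kappa^2 e^{-2\gamma^2(t-s)}\eta_s$; reindexing the sum via $s\mapsto t-s$ converts $\sum_{s=1}^{t}e^{-2\gamma^2(t-s)}\eta_s$ into $\sum_{s=0}^{t-1}e^{-2\gamma^2 s}\eta_{t-s}$, which is exactly the claimed bound. The only bookkeeping to watch is the endpoint exponent in the homogeneous term, handled cleanly by the $(t-s+1)$ shift in the product bound evaluated at $s=1$.
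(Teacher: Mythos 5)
Your proof is correct and takes essentially the same route as the paper's: both hinge on the factorization $A-BK_j=H_jL_jH_j^{-1}$ together with the coupling bound $\|H_{j+1}^{-1}H_j\|\leq 1+\gamma$, which yields the per-step contraction $(1-\gamma)(1+\gamma)=1-\gamma^2$ and hence the degraded exponent $\gamma^2$. The only difference is bookkeeping: the paper takes norms of the transformed deviation $\Delta_t=H_t^{-1}(X_t-\widehat{X}_t)(H_t^{-1})^\top$ step by step and unrolls a scalar recursion, whereas you unroll the matrix recursion first and then bound the transition products $\Phi_{t,s}$; the two orderings produce identical estimates.
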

The proof is similar to \cite[Lemma 3.5]{AC-AH-TK-NL-YM-KT:18}, but we include it for completeness. 
\begin{proof}
By definition, for all $t\geq 1$, 
we have that
\begin{align*}
X_{t+1}=&(A-BK_{t})X_{t}(A-BK_{t})^\top + W,\\
\widehat{X}_{t}=&(A-BK_{t})\widehat{X}_{t}(A-BK_{t})^\top +W.
\end{align*}
Subtracting the equations, substituting $A-BK_{t}=H_t L_t H_t^{-1}$ and rearranging yields
\[
H_{t}^{-1}(X_{t+1}-\widehat{X}_{t})(H_{t}^{-1})^\top=L_{t} H_{t}^{-1}(X_{t}-\widehat{X}_{t})(H_{t}^{-1})^\top L_{t}^\top.
\]
Let $\Delta_{t}=H_t^{-1}(X_t-\widehat{X}_t)(H_t^{-1})^\top$ for all $t\geq 1$. Then the above can be written as
\begin{align*}
\Delta_{t+1}=&(H_{t+1}^{-1}H_{t} L_{t})\Delta_{t} (H_{t+1}^{-1}H_{t} L_{t})^\top\\
&+(H_{t+1}^{-1})(\widehat{X}_{t}-\widehat{X}_{t+1})(H_{t+1}^{-1})^\top.
\end{align*}
Taking the norms yeilds
\begin{align*}
\|\Delta_{t+1}\|\leq& \|L_{t}\|^2\|H_{t+1}^{-1}H_{t}\|^2\|\Delta_{t}\|+\|H_{t+1}^{-1}\|^2\|\widehat{X}_{t}-\widehat{X}_{t+1}\|\\
\leq&(1-\gamma)^2(1+\gamma)^2\|\Delta_{t}\|+\frac{\eta_{t}}{\alpha^2}\\
\leq&(1-\gamma^2)^2\|\Delta_{t}\|+\frac{\eta_{t}}{\alpha^2},
\end{align*}
and by unfolding the recursion, we obtain
\begin{align*}
\|\Delta_{t+1}\|\leq &(1-\gamma^2)^{2t}\|\Delta_1\|+\frac{1}{\alpha^2}\sum_{s=0}^{t-1}(1-\gamma^2)^{2s}\eta_{t-s}\\
\leq & e^{-2\gamma^2 t}\|\Delta_1\|+\frac{1}{\alpha^2}\sum_{s=0}^{t-1}e^{-2\gamma^2 s}\eta_{t-s}.
\end{align*}
Using $X_t-\widehat{X}_t=H_t \Delta_t H_t^\top$ now, we have that
\begin{align*}
\|X_{t+1}-\widehat{X}_{t+1}\|\leq &e^{-2\gamma^2 t}\|\Delta_1\|\|H_{t+1}\|^2+\frac{\|H_{t+1}\|^2}{\alpha^2}\sum_{s=0}^{t-1}e^{-2\gamma^2 s}\eta_{t-s}\\
\leq &\kappa^2e^{-2\gamma^2 t}\|X_{1}-\widehat{X}_{1}\|+\kappa^2\sum_{s=0}^{t-1}e^{-2\gamma^2 s}\eta_{t-s},
\end{align*}
which concludes the proof.
\qed
\end{proof}

We now proceed with some key results that we later use to ensure strong stability for the sequence of policies generated. 
Suppose that a sequence of positive-definite matrices $P_t$ is generated recursively as
\begin{equation}\label{eq31}
P_t=(A-BK_t)^\top P_t(A-BK_t)+\bar{Q}_t+K_t^\top \bar{R}_{t}K_t,
\end{equation}
where 
\begin{equation}\label{eq32}
K_{t+1}=(B^\top P_{t}B+\bar{R}_{t})^{-1}B^\top P_{t}A
\end{equation}
and 
where $\bar{R}_{t}\in \real^{m\times m}$ and $\bar{Q}_t\in\real^{n\times n}$ are given positive-definite matrices for all $t\geq 1$, and $K_1$ is an initial stable policy. The reason for this update will become clear as part of our algorithm in Section~\ref{RicAlg}. The key point we wish to make here is that under the assumption of uniform boundedness of the matrix sequence $\{P_t\}_{t\geq 1}$, and the stability of matrix $K_t$, for all $t\geq 1$, 
the sequence $\{K_t\}_{t\geq 1}$ is uniformly $(\kappa,\gamma)$-strongly stable, with appropriate choices of $ \kappa $ and $ \gamma $. 
\begin{proposition}\label{str-stability}
Assume that for $t\geq 1$, $Q_t, R_t\succeq \mu I$ and $P_t\preceq\nu I$, where $\mu , \nu>0$ and $\{P_t\}_{t\geq 1}$ is the sequence of matrices obtained as the solution of~\eqref{eq31}, and assume that the policy $K_t$ given by~\eqref{eq32} is stable for all $t\geq 1$. Define $\bar{\kappa}=\sqrt{\frac{\nu}{\mu}}$. Then the sequence $\{K_t\}_{t\geq 1}$ is uniformly $(\bar{\kappa},1/2\bar{\kappa}^2)$-strongly stable.
\end{proposition}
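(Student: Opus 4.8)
The plan is to exhibit, for each $t$, an explicit similarity transformation witnessing strong stability of $M_t := A-BK_t$, built from the square root of $P_t$, and to read off the three defining inequalities of Definition~\ref{defstab} directly from the Lyapunov equation~\eqref{eq31} together with two-sided spectral bounds on $P_t$. The first step is to establish those bounds. The upper bound $P_t\preceq \nu I$ is assumed. For the lower bound, note that since $P_t\succeq 0$ the term $(A-BK_t)^\top P_t(A-BK_t)$ in~\eqref{eq31} is positive semidefinite, so $P_t\succeq \bar{Q}_t+K_t^\top \bar{R}_t K_t\succeq \mu I$, using $\bar{Q}_t\succeq \mu I$ and $K_t^\top \bar{R}_t K_t\succeq 0$. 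Hence $\mu I\preceq P_t\preceq \nu I$, which in particular forces $\mu\leq\nu$ and gives $\lambda_{\min}(P_t)\geq\mu$, $\lambda_{\max}(P_t)\leq\nu$.

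With $M_t:=A-BK_t$, I would set $H_t:=P_t^{-1/2}$ and $L_t:=P_t^{1/2}M_t P_t^{-1/2}$, so that $A-BK_t=H_tL_tH_t^{-1}$ by construction. The factor bound is then immediate: $\|H_t\|=\lambda_{\min}(P_t)^{-1/2}\leq \mu^{-1/2}$ and $\|H_t^{-1}\|=\lambda_{\max}(P_t)^{1/2}\leq \nu^{1/2}$, whence $\|H_t\|\,\|H_t^{-1}\|\leq \sqrt{\nu/\mu}=\bar{\kappa}$. The bound on $\|K_t\|$ also comes straight from~\eqref{eq31}: dropping the two remaining semidefinite terms gives $P_t\succeq K_t^\top \bar{R}_t K_t\succeq \mu K_t^\top K_t$, so $K_t^\top K_t\preceq \mu^{-1}P_t\preceq (\nu/\mu)I$ and therefore $\|K_t\|\leq\sqrt{\nu/\mu}=\bar{\kappa}$.

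The heart of the argument is the bound on $\|L_t\|$, which is where the Lyapunov structure is genuinely used. From~\eqref{eq31}, $M_t^\top P_t M_t=P_t-(\bar{Q}_t+K_t^\top \bar{R}_t K_t)\preceq P_t-\mu I$, so
\[
L_t^\top L_t = P_t^{-1/2}M_t^\top P_t M_t P_t^{-1/2} \preceq P_t^{-1/2}(P_t-\mu I)P_t^{-1/2} = I-\mu P_t^{-1} \preceq \left(1-\tfrac{\mu}{\nu}\right)I,
\]
the last step using $P_t^{-1}\succeq \nu^{-1}I$. Thus $\|L_t\|\leq\sqrt{1-\mu/\nu}$, and the proof is completed by the elementary inequality $\sqrt{1-x}\leq 1-x/2$ for $x\in[0,1]$ (equivalently $0\leq x^2/4$), applied with $x=\mu/\nu$, yielding $\|L_t\|\leq 1-\frac{\mu}{2\nu}=1-\frac{1}{2\bar{\kappa}^2}$. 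Since all three bounds hold with the same constants $\bar{\kappa}$ and $\gamma=1/(2\bar{\kappa}^2)$ for every $t$, uniform $(\bar{\kappa},1/(2\bar{\kappa}^2))$-strong stability follows.

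I do not expect a serious obstacle: once the square-root transformation is chosen, every inequality drops out of~\eqref{eq31}. The only points requiring care are orienting the transformation correctly, so that the controllable quantity $M_t^\top P_t M_t$ (rather than $M_t P_t M_t^\top$) appears after squaring $L_t$, and verifying that the lower bound $P_t\succeq\mu I$ genuinely holds, which is precisely what ties $\bar{\kappa}$ to the ratio $\nu/\mu$. Note that the update~\eqref{eq32} is not needed in this proof; strong stability of each individual $K_t$ follows from~\eqref{eq31} and the spectral bounds alone, while the sequential coupling condition $\|H_{t+1}^{-1}H_t\|\leq 1+\gamma$ of Definition~\ref{defstrstab} is a separate matter not addressed by this proposition.
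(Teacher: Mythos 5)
Your proof is correct and follows essentially the same route as the paper's: the same similarity transformation $H_t=P_t^{-1/2}$, $L_t=P_t^{1/2}(A-BK_t)P_t^{-1/2}$, the same key inequality $L_t^\top L_t\preceq I-\mu P_t^{-1}\preceq (1-\bar{\kappa}^{-2})I$ read off from~\eqref{eq31}, and the same elementary bound $\sqrt{1-x}\leq 1-x/2$. If anything, you are slightly more complete than the paper, whose proof never explicitly verifies the requirement $\|K_t\|\leq\bar{\kappa}$ from Definition~\ref{defstab}; your observation that $P_t\succeq K_t^\top \bar{R}_t K_t\succeq \mu K_t^\top K_t$ supplies that missing detail.
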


\begin{proof}
By the assumption of stability and since $Q_t\succeq \mu I$, we have that
\begin{align}
P_t=&(A-BK)^\top P_t(A-BK)+\bar{Q}_t+K^\top \bar{R}_{t}K\nonumber\\
\succeq & (A-BK)^\top P_t(A-BK)+\mu I,\label{eq1}
\end{align}
where we have used the positive-definiteness of $K^\top \bar{R}_{t}K$. In particular, this means that $P_t\succeq \mu I$ for all $t$. On the other hand, assuming $P_t\preceq \nu I$, we have
\begin{equation}\label{eq2}
\mu I \preceq P_t \preceq \nu I.
\end{equation}
Given that $P_t$ is positive-definite and nonsingular, we can define $L_t=P_t^{1/2}(A-BK)P_t^{-1/2}$. Multiplying \eqref{eq1} by $P_t^{-1/2}$ from both sides, we obtain $I\succeq L_t^\top L_t+\mu P_t^{-1}\succeq L_t^\top L_t +\bar{\kappa}^{-2} I$. Thus $L_t^\top L_t\preceq (1-\bar{\kappa}^{-2})I$, so $\|L_t\|\leq \sqrt{1-\bar{\kappa}^{-2}}\leq 1-\bar{\kappa}^{-2}/2$. Also, using \eqref{eq2} we have that 
\[
\|P_t^{1/2}\|\|P_t^{-1/2}\|\leq \bar{\kappa},
\]
which finishes the proof.
\qed\end{proof}

We now present a second useful result, where we show that under the additional property that the rate of changes of sequence $P_t$ is small (which we will be able to establish for our proposed algorithm, see Lemma~\ref{lem7}), one can obtain that the sequence $\{K_t\}_{t\geq 1}$ is sequentially strongly stable. 

\begin{proposition}\label{lem6}
Assume that for $t\geq 1$, $Q_t, R_t\succeq \mu I$ and $P_t\preceq\nu I$, where $\mu, \nu>0$ and $\{P_t\}_{t\geq 1}$ is the sequence of matrices obtained as the solution of~\eqref{eq31}, and assume that the policy $K_t$ given by~\eqref{eq32} is stable for $t\geq 1$. Let $\bar{\kappa}=\sqrt{\frac{\nu}{\mu}}$, 
and suppose that $\|P_{t+1}-P_t\|\leq \eta$ for $t\geq 1$ for some $\eta\leq \mu/\bar{\kappa}^2$. Then the sequence $\{K_t\}_{t\geq 1}$ is sequentially $(\bar{\kappa},1/2\bar{\kappa}^2)$-strongly stable.
\end{proposition}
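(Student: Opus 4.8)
The plan is to reuse the explicit similarity transformation constructed in the proof of Proposition~\ref{str-stability} and to verify only the single additional requirement that Definition~\ref{defstrstab} imposes beyond pointwise strong stability. For each $t\geq 1$ I would take $H_t=P_t^{-1/2}$ and $L_t=P_t^{1/2}(A-BK_t)P_t^{-1/2}$, so that $A-BK_t=H_tL_tH_t^{-1}$. Proposition~\ref{str-stability} already delivers the first two bullets of Definition~\ref{defstrstab} with $\gamma=\frac{1}{2\bar{\kappa}^2}$: it gives $\|K_t\|\leq\bar{\kappa}$ and $\|L_t\|\leq 1-\frac{1}{2\bar{\kappa}^2}=1-\gamma$, while the sandwich $\mu I\preceq P_t\preceq\nu I$ from~\eqref{eq2} yields $\|H_t\|=\|P_t^{-1/2}\|\leq 1/\sqrt{\mu}=:\beta$ and $\|H_t^{-1}\|=\|P_t^{1/2}\|\leq\sqrt{\nu}=:1/\alpha$, with $\beta/\alpha=\sqrt{\nu/\mu}=\bar{\kappa}$ as required.

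Consequently the whole content of the proposition collapses to the third bullet, the consecutive-overlap bound $\|H_{t+1}^{-1}H_t\|=\|P_{t+1}^{1/2}P_t^{-1/2}\|\leq 1+\gamma$, and this is the only place where the hypothesis $\|P_{t+1}-P_t\|\leq\eta$ is used. I would pass to squared operator norms, $\|P_{t+1}^{1/2}P_t^{-1/2}\|^2=\|P_t^{-1/2}P_{t+1}P_t^{-1/2}\|$, and expand $P_{t+1}=P_t+(P_{t+1}-P_t)$ to write $P_t^{-1/2}P_{t+1}P_t^{-1/2}=I+P_t^{-1/2}(P_{t+1}-P_t)P_t^{-1/2}$. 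The triangle inequality together with $\|P_t^{-1/2}\|^2=\|P_t^{-1}\|\leq 1/\mu$ then produces $\|P_{t+1}^{1/2}P_t^{-1/2}\|^2\leq 1+\eta/\mu$.

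It then remains to confirm that the prescribed threshold $\eta\leq\mu/\bar{\kappa}^2$ makes this compatible with the target. Since $\bar{\kappa}^2=\nu/\mu$, the hypothesis reads $\eta/\mu\leq 1/\bar{\kappa}^2$, so $\|P_{t+1}^{1/2}P_t^{-1/2}\|\leq\sqrt{1+\bar{\kappa}^{-2}}$, and the elementary inequality $\sqrt{1+x}\leq 1+x/2$ for $x\geq 0$ (immediate by squaring) with $x=\bar{\kappa}^{-2}$ gives $\sqrt{1+\bar{\kappa}^{-2}}\leq 1+\frac{1}{2\bar{\kappa}^2}=1+\gamma$, which is exactly the third bullet. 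The real obstacle here is not any single estimate but the parameter bookkeeping: the same value $\gamma=\frac{1}{2\bar{\kappa}^2}$ that controls $\|L_t\|$ must simultaneously be the one that the increment bound delivers, and the threshold $\eta\leq\mu/\bar{\kappa}^2$ is tuned precisely so that the square-root inequality closes the gap, so a single pair $(\bar{\kappa},\frac{1}{2\bar{\kappa}^2})$ satisfies all three conditions at once.
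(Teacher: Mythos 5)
Your proof is correct and follows essentially the same route as the paper's: reuse Proposition~\ref{str-stability} for the first two bullets of Definition~\ref{defstrstab}, then bound the consecutive-overlap term by expanding $P_{t+1}=P_t+(P_{t+1}-P_t)$, using $\|P_t^{-1}\|\le 1/\mu$, and closing with $\sqrt{1+x}\le 1+x/2$ under the threshold $\eta\le\mu/\bar{\kappa}^2$. The only (immaterial) difference is that you bound $\|P_{t+1}^{1/2}P_t^{-1/2}\|$, the overlap matching your explicit choice $H_t=P_t^{-1/2}$, whereas the paper bounds the opposite-orientation product $\|P_{t+1}^{-1/2}P_t^{1/2}\|$; the two estimates are proved by the identical argument.
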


\begin{proof}
Proceeding as in the proof of Proposition \ref{str-stability}, one can show that the matrix $L_t=P_t^{1/2}(A-BK_t)P_t^{-1/2}$ satisfies $\|L_t\|\leq 1-1/2\bar{\kappa}^2$ with $\|P_t^{1/2}\|\leq \sqrt{\nu} $ and $\|P_t^{-1/2}\|\leq 1/\sqrt{\mu}$. To establish the sequential strong stability stated by Definition~\ref{defstrstab} 
it thus suffices to show that $\|P_{t+1}^{-1/2}P_t^{1/2}\|\leq 1+1/2\bar{\kappa}^2$ for $t\geq 1$. To this end, observe that $\|P_{t+1}-P_t\|\leq \eta$, and that 
\begin{align*}
\|P_{t+1}^{-1/2}P_t^{1/2}\|^2&=\|P_{t+1}^{-1/2}P_t P_{t+1}^{-1/2}\|\\
&\leq \|P_{t+1}^{-1/2}P_{t+1}P_{t+1}^{-1/2}\|+\|P_{t+1}^{-1/2}(P_{t+1}-P_t)P_{t+1}^{-1/2}\|\\
&\leq 1+\|P_{t+1}^{-1/2}\|^2\|P_{t+1}-P_t\|\\
&\leq 1+\frac{\eta}{\mu},
\end{align*}
where the second inequality follow by the sub-multiplicative of matrix operator norm. 
Hence, since $\eta\leq \mu/\bar{\kappa}^2$, then $\|P_{t+1}^{-1/2}P_t^{1/2}\|\leq \sqrt{1+1/\bar{\kappa}^2}\leq 1+1/2\bar{\kappa}^2$ as required.
\qed\end{proof}

The above results rely on uniform boundedness of the sequence $\{P_t\}_{t\geq 1}$, which we assume throughout the paper. However, we can show that stability of $K_1$ is enough to guarantee this property in the scalar case, see Proposition~\ref{prop1} in the Appendix. Based on our extensive simulation studies, one of which is shown in Example~\ref{example:A}, we believe that this property should hold only by assuming stability of $ K_1 $ for the general case. One of the main reason for the difficulty of establishing this result is the lack of monotonicity of the evolutions of the 
Newton-Hewer dynamics with respect to the underlying system parameter, a sharp contrast with the Riccati difference updates~\cite{PEC-DQM:70}, which we have recently reported in~\cite{MA-BG-TL:20}. In this sense, the proof of Proposition~\ref{prop1} for the scalar case establishes the boundedness property of the sequence $\{P_t\}_{t\geq 1}$ without relying on monotonicity. 
We have outlined further details in Remark~\ref{remark:A}.

\section{The Online Riccati Algorithm}\label{RicAlg}

We outline our main algorithm in this section. 
Our assumptions are as follows: 

\begin{assumption}\label{assump} Throughout we assume that
\begin{itemize}
\item The pair $(A,B)$ is stabilizable.
\item The cost matrices $Q_t$ and $R_t$ are positive-definite and $\mu I \preceq Q_t$, $\mu I \preceq R_t $, and $\Tr(Q_t)\leq \sigma$, $\Tr(R_t)\leq \sigma$, for some $\sigma>\mu>0$ for all $t\geq 1$.
\item For the noise covariance matrix $W$ we have that $\omega=\Tr(W)<\infty$. 
\end{itemize}
\end{assumption}
\begin{algorithm}
	\caption{\textbf{Online Riccati Update}}
	\label{alg}
\begin{algorithmic}[1]
  \REQUIRE The system matrices $A$ and $B$, initial state $x_1$, time horizon $T$, parameters $\nu,\mu, \kappa=\sqrt{\nu/\mu}, \gamma=1/(2\kappa^2), \sigma$ 
  \ENSURE A sequence of stable policies $\{K_t\}_{t=1}^T$
  \STATE \textbf{Initialize} $K_1$ to be stable
  \FOR{each $t=1,2,\cdots,T$}
  	\STATE receive $x_t$
  	\STATE use controller $u_t=-K_t x_t$ and receive $Q_t$ and $R_t$
  	\STATE update $\bar{R}_t=\frac{t-1}{t}\bar{R}_{t-1}+\frac{1}{t}R_t$, $\bar{Q}_t=\frac{t-1}{t}		\bar{Q}_{t-1}+\frac{1}{t}Q_t$
  	\STATE update $P_t$ as the solution of \label{eq5}
\begin{equation*}
P_t=(A-BK_t)^\top P_t(A-BK_t)+\bar{Q}_t+K_t^\top \bar{R}_{t}K_t
\end{equation*}
	\STATE \textbf{Reset:} \IF{$t=t^\star:=\big\lceil{\frac{4\kappa^3\|B\|}{\gamma\mu}(2\sigma\kappa+\frac{2\kappa^3\|B\|\sigma(1+\kappa^2)}{\gamma})+1}\big\rceil$}
	\STATE Initialize $\ell=0$, $\widehat{P}_{0}=P_{t^\star}$, and $\widehat{K}_{0}=K_{t^\star}$
	\WHILE{$\|\widehat{P}_{\ell}-\widehat{P}_{\ell-1}\|>(\frac{2\sigma}{\|B\|}+\frac{4\kappa^2\sigma(1+\kappa^2)}{\gamma})/t^\star$}
	\STATE $\ell\leftarrow \ell+1$
	\STATE $\widehat{K}_{\ell}=(B^\top \widehat{P}_{\ell-1}B+\bar{R}_{t^\star})^{-1}B^\top \widehat{P}_{\ell-1}A$
	\STATE $\widehat{P}_{\ell}$ satisfies $\widehat{P}_{\ell}=(A-B\widehat{K}_{\ell})^\top \widehat{P}_{\ell}(A-B\widehat{K}_{\ell})+\bar{Q}_{t^\star}+\widehat{K}_{\ell}^\top \bar{R}_{t^\star}\widehat{K}_{\ell}$
	\ENDWHILE
	\RETURN $P_{t^\star}=\widehat{P}_{\ell}$
	\ENDIF
	\RETURN $K_{t+1}=(B^\top P_{t}B+\bar{R}_{t})^{-1}B^\top P_{t}A$
  \ENDFOR
\end{algorithmic}
\end{algorithm}

A formal description is given in Algorithm~\ref{alg}. We provide an informal description. We start from a stable policy $K_1$; the existence of $K_1$ is provided by the assumption of stabilizability of the control system. At each time step $ t \geq 1 $, 
the controller uses the policy $u_t=-K_t x_t$ after observing $x_t$, then the cost matrices $Q_t$ and $R_t$ are revealed, and the controller updates $P_t$ and $K_t$ using the average of the history of $Q_t$s and $R_t$s through~\eqref{Halg}. There is a technical step in our algorithm, which we call the ``reset'' step and describe in detail later in the proof; this step allows us to show that using these updates the change of the norm of the policies is $\mathcal{O}(1/t)$, 
and this gives a regret bound $\mathcal{O}(\log(T))$. Before we state the algorithm, we need to elaborate on the parameters used.

\begin{remark}[Parameters used in Algorithm~\ref{alg}]
Our algorithm naturally uses parameters $\mu$ and $\sigma$, stated in Assumption~\ref{assump}. For the reset step, we also need (an estimate on) the strong stability parameters $\kappa$ and $\gamma$, which are defined in Algorithm~\ref{alg}. Proposition~\ref{str-stability} plays a key role in that regard, as it states that as long as we can estimate a uniform bound on the sequence $P_t$, we can obtain these parameters. In the scalar case, we know this uniform bound by Proposition~\ref{prop1}; in other cases, given that the parameters are not needed in the early steps of the algorithm, one can envision that we can run our algorithm with a large estimate on this bound and adjust it if necessary. Extending Proposition~\ref{prop1} to vector cases, which is an avenue of our current research, will remove this restriction all together.
\end{remark}

\section{Main Results}\label{Main}

We are now in a position to state our main contribution, providing a logarithmic bound for the regret~\eqref{eq:regret-def}. We have opted not to explicitly display the bound as part of the statement; this can be found in~\eqref{eq:thm}.

\begin{theorem}\label{theorem:main}
Suppose that the tuple $(A,B,\{Q_t\}_{t=1}^T,\{R_t\}_{t=1}^T, W)$ satisfies Assumption~\ref{assump}. Suppose that the matrices $P_t$ generated by Algorithm~\ref{alg} are uniformly bounded. 
Then we have that
\begin{equation*}
\mathcal{R}(T)=\mathcal{O}(\log(T)).
\end{equation*}
\end{theorem}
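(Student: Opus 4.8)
The plan is to decompose the regret into a transient part and a comparator part and bound each by $\mathcal{O}(\log T)$. For a fixed policy $K$ write $\widehat{J}_t(K)=\Tr\big((Q_t+K^\top R_t K)\widehat{X}_t(K)\big)$ for its \emph{steady-state} cost, where $\widehat{X}_t(K)$ solves $\widehat{X}=(A-BK)\widehat{X}(A-BK)^\top+W$; the standard Lyapunov duality gives $\widehat{J}_t(K)=\Tr\big(P_t(K)W\big)$, with $P_t(K)$ solving the dual Lyapunov equation with constant term $Q_t+K^\top R_t K$. Because $\widehat{J}_t(K)$ differs from the true per-stage cost of a fixed $K$ only through a geometrically decaying covariance transient, replacing the comparator $\min_K\sum_t J_t(K)$ by $\sum_t\widehat{J}_t(K^\star)$ with $K^\star=\argmin_K\sum_t\widehat{J}_t(K)$ costs only $\mathcal{O}(1)$, yielding
\[
R(T)\;\le\;\underbrace{\textstyle\sum_{t=1}^T\big(J_t(K_t)-\widehat{J}_t(K_t)\big)}_{\textbf{(A)}}\;+\;\underbrace{\textstyle\sum_{t=1}^T\big(\widehat{J}_t(K_t)-\widehat{J}_t(K^\star)\big)}_{\textbf{(B)}}\;+\;\mathcal{O}(1),
\]
where $J_t(K_t)=\Tr\big((Q_t+K_t^\top R_t K_t)X_t\big)$ uses the \emph{actual} covariance $X_t$ produced by Algorithm~\ref{alg}.

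The crux, and the step I expect to be hardest, is the \emph{slow variation} estimate $\|P_{t+1}-P_t\|=\mathcal{O}(1/t)$ and hence $\|K_{t+1}-K_t\|=\mathcal{O}(1/t)$. The mechanism is that the running averages obey $\|\bar{Q}_{t+1}-\bar{Q}_t\|=\tfrac{1}{t+1}\|Q_{t+1}-\bar{Q}_t\|=\mathcal{O}(1/t)$ and likewise for $\bar{R}_t$, so the DARE solution $P_t^\star$ associated with $(\bar{Q}_t,\bar{R}_t)$ drifts by $\mathcal{O}(1/t)$ (Lipschitz dependence of the DARE solution on its data on the compact set of strongly stable policies). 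One Newton--Hewer step contracts the tracking error quadratically, $\|P_t-P_t^\star\|\le C\|P_{t-1}-P_{t-1}^\star\|^2+\|P_t^\star-P_{t-1}^\star\|$, so once the iterate is inside the quadratic basin the error $e_t:=\|P_t-P_t^\star\|$ satisfies a recursion of the form $e_{t+1}\le\tfrac12 e_t+\mathcal{O}(1/t)$, which forces $e_t=\mathcal{O}(1/t)$. The sole role of the reset at $t=t^\star$ is to drive $e_{t^\star}$ below the basin threshold in $\mathcal{O}(1)$ inner iterations; thereafter the single per-stage update maintains $e_t=\mathcal{O}(1/t)$. The first $t^\star=\mathcal{O}(1)$ stages contribute only $\mathcal{O}(1)$ to the regret, since every $K_t$ is stable (Newton--Hewer preserves stabilizability) and, by Proposition~\ref{str-stability} together with the boundedness hypothesis $P_t\preceq\nu I$, uniformly $(\bar{\kappa},1/2\bar{\kappa}^2)$-strongly stable, so each $J_t(K_t)$ is bounded. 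Proving the quadratic-convergence estimate with constants uniform in $t$, i.e.\ Lemma~\ref{lem7}, is the main obstacle, precisely because the Newton--Hewer map fails to be monotone here.

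Granting slow variation, the remainder follows. Since $\|P_{t+1}-P_t\|\le\eta=\mathcal{O}(1/t)$, Proposition~\ref{lem6} yields sequential $(\bar{\kappa},1/2\bar{\kappa}^2)$-strong stability of $\{K_t\}$, and the steady-state covariances drift by $\eta_t:=\|\widehat{X}_{t+1}-\widehat{X}_t\|=\mathcal{O}(1/t)$ (Lipschitz dependence of the Lyapunov solution on $K_t$). Feeding $\eta_t=\mathcal{O}(1/t)$ into Lemma~\ref{lem3} and summing the resulting convolution gives
\[
\textstyle\sum_{t=1}^T\|X_t-\widehat{X}_t\|\;\le\;\mathcal{O}(1)+\kappa^2\sum_{t=1}^T\sum_{s=0}^{t-1}e^{-2\gamma^2 s}\tfrac{C}{t-s}\;=\;\mathcal{O}(\log T),
\]
because the geometric sum over $s$ is summable and $\sum_{k=1}^T 1/k=\mathcal{O}(\log T)$. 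Term \textbf{(A)} is then controlled by $|\textbf{(A)}|\le\sum_t n\,\|Q_t+K_t^\top R_t K_t\|\,\|X_t-\widehat{X}_t\|\le C'\sum_t\|X_t-\widehat{X}_t\|=\mathcal{O}(\log T)$, using that $\|Q_t+K_t^\top R_t K_t\|\le\sigma(1+\bar{\kappa}^2)$ is uniformly bounded.

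For the comparator term \textbf{(B)} I would run a Follow-the-Leader argument. The minimizer of $\sum_{s\le t}\widehat{J}_s(K)=t\,\Tr\big(\bar{P}_t(K)W\big)$ is exactly the DARE policy $K_t^\star$ for the averaged data $(\bar{Q}_t,\bar{R}_t)$, and the algorithm's $K_{t+1}$ is one Newton--Hewer step from $P_t\approx P_t^\star$, so $\|K_t-K_t^\star\|\le\|K_t-K_{t-1}^\star\|+\|K_{t-1}^\star-K_t^\star\|=\mathcal{O}(1/t)$ by slow variation and Lipschitzness of the DARE map. The Be-the-Leader inequality $\sum_t\widehat{J}_t(K_t^\star)\le\sum_t\widehat{J}_t(K^\star)$ (valid for arbitrary losses) then gives
\[
\textbf{(B)}\;\le\;\textstyle\sum_{t=1}^T\big(\widehat{J}_t(K_t)-\widehat{J}_t(K_t^\star)\big)\;\le\;L\sum_{t=1}^T\|K_t-K_t^\star\|\;=\;\mathcal{O}(\log T),
\]
where $L$ is a uniform Lipschitz constant for $K\mapsto\widehat{J}_t(K)=\Tr\big(P_t(K)W\big)$ on the strongly stable set (finite since $Q_t,R_t$ are bounded, $\Tr(W)=\omega$, and the set is compact and bounded away from the stability boundary). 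Combining the bounds on \textbf{(A)}, \textbf{(B)}, and the $\mathcal{O}(1)$ comparator correction yields $R(T)=\mathcal{O}(\log T)$, as claimed.
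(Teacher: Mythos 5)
Your proposal is sound in its architecture and reaches the right bound, and its skeleton --- split the regret into a transient term $\sum_t (J_t(K_t)-\widehat{J}_t(K_t))$, a steady-state comparison term, and an $\mathcal{O}(1)$ comparator correction, then drive everything off an $\mathcal{O}(1/t)$ slow-variation estimate --- coincides with the paper's decomposition \eqref{eqx1}--\eqref{eqx3} and its Lemma~\ref{lem8} treatment of the transient (Lemma~\ref{lem3} plus the geometric-times-harmonic convolution). But you prove the two key lemmas by genuinely different routes. For the slow-variation estimate (the paper's Lemma~\ref{lem7}), you track the drifting DARE solution $P_t^\star$ of the averaged data and use quadratic contraction of the Newton--Hewer map to get $e_{t+1}\le \tfrac12 e_t+\mathcal{O}(1/t)$, with the reset serving to enter the quadratic basin; the paper instead runs a direct induction on $z_t=\|P_t-P_{t-1}\|$ via an explicit recursion $z_{t+1}\le c_t(h_tz_t+d_t)^2+e_{t+1}$, with the reset supplying the base case $z_{t^\star+1}\le m/(t^\star+1)$. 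Your version is conceptually cleaner and explains the reset's role, but it leans on two facts the paper never needs in that lemma --- uniform Lipschitz dependence of the DARE solution on $(\bar{Q}_t,\bar{R}_t)$ and a $t$-uniform quadratic-convergence constant --- whereas the paper's recursion yields the explicit constants that actually define $t^\star$ and the reset threshold in Algorithm~\ref{alg}. For the comparator term, you use a Be-the-Leader argument plus Lipschitz continuity of $K\mapsto\widehat{J}_t(K)$ and the tracking bound $\|K_t-K_t^\star\|=\mathcal{O}(1/t)$; the paper's Lemma~\ref{lem9} instead collapses the sums exactly, using $Q_t=t\bar{Q}_t-(t-1)\bar{Q}_{t-1}$ and the identity \eqref{eq10} to reduce the term to $T(P_T-P^\star)\bullet W$ plus a sum of quadratic corrections, needing only the quadratic-convergence tail bound rather than any Lipschitz constants, and it disposes of the $K^\star$-versus-$K^\dagger$ gap by Hewer's comparison $P^\star\preceq P^\dagger$ where you invoke minimality of $K^\star$ (equivalent content). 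Two caveats: your crux step is a mechanism sketch whose uniform constants constitute the real work (you acknowledge this); and your $\mathcal{O}(1)$ comparator correction quietly assumes strong-stability parameters for the $T$-dependent minimizer $K^\dagger$ that are uniform in $T$ --- a point on which the paper's Lemma~\ref{lem10} is equally informal, so it is a shared, not a new, gap.
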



The rest of this section is devoted to proving Theorem~\ref{theorem:main}. The proof is quite involved, and for this reason we find it useful to provide a brief description to help the reader navigate through the proof. Our first technical result Lemma~\ref{stability} shows that Algorithm~\ref{alg}, as long as it is initialized at an stable policy, iteratively produces stable polices. This step is analogous to the classical result of~\cite{GH:71} for the case where the cost objective matrices $Q_t$ and $R_t$ are fixed. Recall that, by Proposition~\ref{str-stability}, stability of policies $K_t$ is required to establish strong stability. %
A technical part of this proof demonstrates the reason why we need the reset step of the algorithm to ensure that the sequence of policies $ \{P_{t+1}-P_t\} $ decay as $ m/t $, for some $ m> 0 $. Using this and by rewriting the regret using trace products, we establish a set of bounds in Lemmas~\ref{lem8},~\ref{lem9}, and \ref{lem10}~which eventually yield the result. 

\subsection*{Proof of Theorem~\ref{theorem:main}:} 

We first provide a straightforward reformulation of the regret function. For matrices $A$ and $B$ of appropriate size, let $A\bullet B=\Tr(A^\top B)$. Then

\begin{align}
R(T)=&\sum_{t=1}^T \mathbb{E}\Big[x_t^\top Q_t x_t + u_t^\top R_t u_t\Big]-\sum_{t=1}^T \mathbb{E}\Big[{x_t^\dagger}^\top Q_t x_t^\dagger +{x_t^\dagger}^\top K^{\dagger\top} R_t K^{\dagger}x_t^\dagger\Big]\nonumber\\
=&\sum_{t=1}^T (Q_t+K_t^\top R_t K_t)\bullet X_t-\sum_{t=1}^T (Q_t+K^{\dagger\top} R_t K^\dagger)\bullet X^\dagger_t\nonumber\\
\end{align}
As a result, we have that 
\begin{align}
R(T)=&\sum_{t=1}^T (Q_t+K_t^\top R_t K_t)\bullet (X_t-\widehat{X}_t)\label{eqx1}\\
&+\sum_{t=1}^T (Q_t+K_t^\top R_t K_t)\bullet \widehat{X}_t-\sum_{t=1}^T (Q_t+K^{\star\top} R_t K^\star) \bullet\widehat{X}^\star\label{eqx2}\\
&+\sum_{t=1}^T (Q_t+K^{\star\top} R_t K^\star) \bullet\widehat{X}^\star-\sum_{t=1}^T (Q_t+K^{\dagger\top} R_t K^\dagger) \bullet\widehat{X}^\dagger\label{eqx4}\\
&+\sum_{t=1}^T (Q_t+K^{\dagger\top} R_t K^\dagger)\bullet (\widehat{X}^\dagger-{X}_t^\dagger)\label{eqx3},
\end{align}
where $K^\dagger$ is the fixed optimal policy for the system $(A, B, \{Q_t\}_{t=1}^T, \{R_t\}_{t=1}^T, W)$, $X_t=\mathbb{E}[x_t x_t^\top]$ is the covariance matrix of $x_t$ when the system follows policies $K_t$ generated by Algorithm \ref{alg}, 
$\widehat{X}_t$ is the steady-state covariance matrix using the policy $K_t$, i.e. $\widehat{X}_t$ satisfies 
\[
\widehat{X}_t=(A-BK_t)\widehat{X}_t(A-BK_t)^\top + W,
\] 
and 
\[
X^\dagger_t=\mathbb{E}[x_t^\dagger {x_t^\dagger}^\top]
\]
is the covariance matrix of the state $x_t^\dagger$ at time $t$ when the system uses policy $K^\dagger$ at each time $t$; similarly, 
$\widehat{X}^\dagger$ is the steady-state covariance matrix using the policy $K^\dagger$, i.e., $\widehat{X}^\dagger$ satisfies 
\begin{equation}\label{eqre1}
\widehat{X}^\dagger=(A-BK^\dagger)\widehat{X}^\dagger(A-BK^\dagger)^\top + W.
\end{equation}
$K^\star$ is the solution to DARE and $\widehat{X}^\star$ is the steady-state covariance matrix using policy $K^\star$. From now on, we use the notation $A_t=A-BK_t$ to simplify the presentation.

Note that by the following computation we show that \eqref{eqx4} is negative. Since $\widehat{X}^\star$ and $\widehat{X}^\dagger$ are fixed, we have that
\begin{align*}
\sum_{t=1}^T (Q_t+K^{\star\top} R_t K^\star) \bullet\widehat{X}^\star-&\sum_{t=1}^T (Q_t+K^{\dagger\top} R_t K^\dagger) \bullet\widehat{X}^\dagger\\
&=T (\bar{Q}_T+K^{\star\top} \bar{R}_T K^\star)\bullet\widehat{X}^\star-T (\bar{Q}_T+K^{\dagger\top} \bar{R}_T K^\dagger)\bullet\widehat{X}^\dagger\\
&=T(P^\star-{A^\star}^\top P^\star A^\star)\bullet \widehat{X}^\star-T(P^\dagger-{A^\dagger}^\top P^\dagger A^\dagger)\bullet \widehat{X}^\dagger\\
&=T(P^\star\bullet \widehat{X}^\star- P^\star\bullet{A^\star} \widehat{X}^\star {A^\star}^\top)- T(P^\dagger\bullet \widehat{X}^\dagger- P^\dagger\bullet{A^\dagger} \widehat{X}^\dagger {A^\dagger}^\top)\\
&=T(P^\star\bullet \widehat{X}^\star-P^\star\bullet (\widehat{X}^\star-W))-T(P^\dagger\bullet \widehat{X}^\dagger-P^\dagger\bullet (\widehat{X}^\dagger-W)\\
&=T(P^\star-P^\dagger)\bullet W\leq 0,
\end{align*}
where $P^\star$ and $P^\dagger$ satisfies $P=(A-BK)^\top P(A-BK)+\bar{Q}_T +K^\top \bar{R}_T K$ for $K=K^\star$ and $K=K^\dagger$, respectively, and we have used this fact in the second equality, the cyclic property of the trace in the third equality, and \eqref{eqre1} in the forth equality. By \cite[Theorem 1]{GH:71}, $P^\star\preceq P^\dagger$ and we have the result.

We start with our first technical result, which shows that Algorithm~\ref{alg} produces stable polices. This step is similar to the classical result of~\cite{GH:71} for the case where the cost objective matrices $Q_t$ and $R_t$ are fixed. Recall that stability of policies $K_t$ is required to establish strong stability, see Proposition~\ref{str-stability}.

\begin{lemma}\label{stability}
Suppose that the pair $(A,B)$ is stabilizable and let the sequence $\{K_t\}_{t\geq 1}$ be generated by Algorithm~\ref{alg}, starting from a stable policy $K_1$. Then policy $K_t$ remains stable for all $t\geq 1$.
\end{lemma}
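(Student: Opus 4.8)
I will argue by induction on $t$, showing that each Newton--Hewer update preserves stability. The base case is immediate: $K_1$ is stable by the initialization in Algorithm~\ref{alg}. For the inductive step, suppose $K_t$ is stable and write $A_t = A - BK_t$, so that $\rho(A_t) < 1$. Then the Lyapunov equation~\eqref{eq31} has a unique positive-definite solution $P_t$ (indeed $P_t \succeq \mu I$, as already noted in the proof of Proposition~\ref{str-stability}). The heart of the matter is to exhibit $P_t$ as a Lyapunov certificate for the \emph{next} closed-loop matrix $A - BK_{t+1}$: I will show that $P_t - (A-BK_{t+1})^\top P_t (A-BK_{t+1}) \succ 0$, and then invoke the discrete-time Lyapunov criterion (if $P \succ 0$ and $P - M^\top P M \succ 0$ then $\rho(M) < 1$, seen by testing against an eigenvector) to conclude $\rho(A - BK_{t+1}) < 1$.

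The key algebraic tool is a completing-the-square identity. For fixed $P_t \succ 0$ and $\bar R_t \succ 0$, the map $K \mapsto (A-BK)^\top P_t (A-BK) + K^\top \bar R_t K$ is minimized precisely at $K_{t+1} = (B^\top P_t B + \bar R_t)^{-1} B^\top P_t A$ of~\eqref{eq32}, and for any $K$ one has the exact expansion equal to its value at $K_{t+1}$ plus the quadratic remainder $(K - K_{t+1})^\top (B^\top P_t B + \bar R_t)(K - K_{t+1})$. Evaluating this identity at $K = K_t$ and substituting into~\eqref{eq31} yields
\[
P_t - (A-BK_{t+1})^\top P_t (A-BK_{t+1}) = \bar Q_t + K_{t+1}^\top \bar R_t K_{t+1} + (K_t - K_{t+1})^\top (B^\top P_t B + \bar R_t)(K_t - K_{t+1}).
\]
The right-hand side is a sum of positive-semidefinite terms dominated below by $\bar Q_t \succeq \mu I \succ 0$ (since $\bar Q_t$ is a convex combination of the $Q_s \succeq \mu I$ under Assumption~\ref{assump}), which is exactly the strict positivity needed. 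I want to emphasize the conceptual point: although $\bar Q_t, \bar R_t$ vary with $t$ through the running averages in Algorithm~\ref{alg} — in contrast to the fixed cost matrices of the classical result~\cite{GH:71} — this is immaterial to stability, because the argument is entirely step-local, depending only on $P_t$ and the matrices currently used to define $K_{t+1}$.

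It remains to treat the reset step, where the inner \texttt{while} loop replaces $P_{t^\star}$ by running several pure Newton--Hewer iterations with the \emph{frozen} cost matrices $\bar Q_{t^\star}, \bar R_{t^\star}$, starting from $\widehat K_0 = K_{t^\star}$. Here the matrices are genuinely fixed, so the same completing-the-square computation applied inductively to $(\widehat K_\ell, \widehat P_\ell)$ shows every $\widehat K_\ell$ is stable; this is precisely the situation covered by the classical result of~\cite{GH:71}, which also guarantees convergence (hence termination of the loop). The returned $P_{t^\star} = \widehat P_\ell$ is the Lyapunov solution associated with a stable $\widehat K_\ell$, so the subsequent update producing $K_{t^\star+1}$ falls back under the main inductive argument and is again stable.

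The step I expect to be most delicate to state carefully is the completing-the-square identity and the verification that the remainder term is positive semidefinite, so that the Lyapunov difference is \emph{strictly} positive rather than merely nonnegative; the strictness, supplied by the $\bar Q_t \succeq \mu I$ floor from Assumption~\ref{assump}, is what upgrades marginal stability to $\rho(A-BK_{t+1}) < 1$. Beyond this, the only conceptual care needed is to confirm that the time-varying averages do not interfere, which the step-local nature of the estimate resolves.
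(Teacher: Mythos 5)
Your proof is correct and follows essentially the same route as the paper's: induction on $t$, the completing-the-square identity giving $P_t - (A-BK_{t+1})^\top P_t (A-BK_{t+1}) = \bar{Q}_t + K_{t+1}^\top \bar{R}_t K_{t+1} + (K_t-K_{t+1})^\top(B^\top P_t B + \bar{R}_t)(K_t - K_{t+1}) \succeq \mu I \succ 0$, and then a discrete Lyapunov conclusion that $\rho(A-BK_{t+1})<1$ (the paper phrases this last step as finiteness of the series $\sum_{i}(A_{t+1}^\top)^i V A_{t+1}^i$, you phrase it via the eigenvector test; these are the same standard fact). Your explicit treatment of the reset step at $t=t^\star$ is a welcome point of care that the paper's proof leaves implicit.
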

\begin{proof}
We proceed by an induction argument. First, since the system is stabilizable, there exists a stable policy and hence we can choose $K_1$ to be stable, i.e. such that $\rho(A-BK_1)< 1$. Assume now that $K_t$ is stable, for some $ t\geq 1 $. 
Then, using~\eqref{eq31}, $P_t$ is uniquely determined by 
\begin{equation}\label{eq3}
P_t=\sum_{i=0}^\infty (A_t^{\top})^i(\bar{Q}_t+K_t^\top \bar{R}_{t} K_t)A_t^{i}.
\end{equation}
By a straightforward computation, we have that
\begin{align*}
A_t^\top P_t A_t +K_t^\top \bar{R}_{t}K_t=&(A-BK_t)^\top P_t (A-BK_t)+K_t^\top \bar{R}_{t}K_t\\
=& A^\top P_t A-K_t^\top B^\top P_tA-A^\top P_t BK_t +K_t^\top(B^\top P_t B+\bar{R}_t)K_t\\
=& A^\top P_t A-K_t^\top (B^\top P_t B+\bar{R}_t)K_{t+1}-K_{t+1}^\top (B^\top P_t B+\bar{R}_t)K_t\\
&+K_t^\top(B^\top P_t B+\bar{R}_t)K_t\\
=& A^\top P_t A+(K_{t+1}-K_t)^\top (B^\top P_t B+\bar{R}_{t})(K_{t+1}-K_t)\\
&-K_{t+1}^\top(B^\top P_t B+\bar{R}_t)K_{t+1}\\
=& A^\top P_t A+(K_{t+1}-K_t)^\top (B^\top P_t B+\bar{R}_{t})(K_{t+1}-K_t)\\
&-K_{t+1}^\top B^\top P_t A-A^\top P_t B K_{t+1}+K_{t+1}^\top(B^\top P_t B+\bar{R}_t)K_{t+1}\\
=& A_{t+1}^\top P_t A_{t+1}+K_{t+1}^\top \bar{R}_{t}K_{t+1}+(K_{t+1}-K_t)^\top (B^\top P_t B+\bar{R}_{t})(K_{t+1}-K_t),
\end{align*}
where we have used $(B^\top P_t B+\bar{R}_t)K_{t+1}=B^\top P_t A$ in the third and fifth equalities. 
Therefore, using this and~\eqref{eq31}, we have that
\begin{equation}\label{eq10}
P_t=A_{t+1}^\top P_t A_{t+1}+ V,
\end{equation}
where 
\[
V=K_{t+1}^\top \bar{R}_{t}K_{t+1}+(K_{t+1}-K_t)^\top (B^\top P_t B+\bar{R}_{t})(K_{t+1}-K_t)+\bar{Q}_t.
\] 
As a result, 
\begin{align}\label{eqx10}
P_t=\sum_{i=0}^\infty (A_{t+1}^{\top})^i(V)A_{t+1}^{i},
\end{align}
It is easy to observe that $V$ is positive-definite. 
Now, using \eqref{eq3}, since $K_t$ is stable, the matrix $P_t$ is finite. Using \eqref{eqx10}, and the fact that the left side of \eqref{eqx10} is finite, we have that $\rho(A_{t+1})<1$, i.e., $K_{t+1}$ is stable, otherwise the sum on the right side of \eqref{eqx10} will diverge.
\qed\end{proof}

In order to get a $\log(T)$ regret bound, we need to have bounds  of order $\mathcal{O}(1/t)$ on $\|P_t-P_{t-1}\|$, $\|\widehat{X}_t-\widehat{X}_{t-1}\|$ and $\|K_t-K_{t-1}\|$. Also, recall that such bounds are essential for obtaining sequential strong stability using  Proposition~\ref{lem6}. 
The next lemma and its corollary serves this purpose. 
\begin{lemma}\label{lem7}
Suppose that $\mu I\preceq Q_t, R_t$ and $\Tr(Q_t), \Tr(R_t)\leq \sigma$. Let $\{P_t\}_{t\geq 1}$ and $\{K_t\}_{t\geq 1}$ be the sequences of matrices generated by Algorithm~\ref{alg}, and assume that the sequence $\{K_t\}_{t\geq 1}$ is $(\kappa,\gamma)$-strongly stable.
Then we have $\|P_{t+1}-P_{t}\|\leq m/t$ for some $m>0$, for $t\geq 1$.
\end{lemma}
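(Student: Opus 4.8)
The plan rests on two observations. First, the running averages change slowly: since $\|Q_{t+1}\|,\|\bar Q_t\|\le\Tr(\cdot)\le\sigma$, the update rule gives $\|\bar Q_{t+1}-\bar Q_t\|=\frac{1}{t+1}\|Q_{t+1}-\bar Q_t\|\le\frac{2\sigma}{t+1}$, and likewise $\|\bar R_{t+1}-\bar R_t\|\le\frac{2\sigma}{t+1}$. This $\mathcal{O}(1/t)$ drift of the cost data is the ultimate source of the claimed decay. Second, strong stability makes everything bounded: writing $A_t=A-BK_t$ and using $A_t=H_tL_tH_t^{-1}$ with $\|L_t\|\le1-\gamma$ and $\|H_t\|\|H_t^{-1}\|\le\kappa$ gives $\|A_t^i\|\le\kappa(1-\gamma)^i$, so the series solution of~\eqref{eq31} yields the uniform bound $\|P_t\|\le\frac{\kappa^2}{1-(1-\gamma)^2}(\sigma+\kappa^2\sigma)=:\nu$ for all $t$.

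The central step is to avoid a crude linear perturbation of~\eqref{eq31} and instead reuse the identity established inside the proof of Lemma~\ref{stability}, namely $P_t=A_{t+1}^\top P_t A_{t+1}+V$ with $V=K_{t+1}^\top\bar R_t K_{t+1}+(K_{t+1}-K_t)^\top(B^\top P_t B+\bar R_t)(K_{t+1}-K_t)+\bar Q_t$. Since $P_{t+1}$ itself satisfies $P_{t+1}=A_{t+1}^\top P_{t+1}A_{t+1}+\bar Q_{t+1}+K_{t+1}^\top\bar R_{t+1}K_{t+1}$, both relations are driven by the \emph{same} closed-loop matrix $A_{t+1}$, and subtracting produces the Lyapunov equation $P_{t+1}-P_t=A_{t+1}^\top(P_{t+1}-P_t)A_{t+1}+D_t$, where
\[
D_t=(\bar Q_{t+1}-\bar Q_t)+K_{t+1}^\top(\bar R_{t+1}-\bar R_t)K_{t+1}-(K_{t+1}-K_t)^\top(B^\top P_t B+\bar R_t)(K_{t+1}-K_t).
\]
The crucial feature is that the policy increment $K_{t+1}-K_t$ enters $D_t$ only quadratically. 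Applying the operator-norm bound from the first paragraph, $\|P_{t+1}-P_t\|\le\frac{\kappa^2}{1-(1-\gamma)^2}\|D_t\|\le\frac{a}{t}+b\,\|K_{t+1}-K_t\|^2$ for explicit constants $a,b>0$, where the quadratic coefficient uses $\|B^\top P_t B+\bar R_t\|\le\|B\|^2\nu+\sigma$.

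To control $\|K_{t+1}-K_t\|$ I would perturb the Riccati update $K_{t+1}=(B^\top P_t B+\bar R_t)^{-1}B^\top P_t A$ in its arguments $P_t$ and $\bar R_t$, using $\|(B^\top P_t B+\bar R_t)^{-1}\|\le1/\mu$ together with $G^{-1}-{G'}^{-1}=-G^{-1}(G-G'){G'}^{-1}$; with the drift estimate on $\bar R_t$ this gives $\|K_{t+1}-K_t\|\le c\,\|P_t-P_{t-1}\|+d/t$ for constants $c,d>0$. Writing $\delta_t^P=\|P_{t+1}-P_t\|$ and $\delta_t^K=\|K_{t+1}-K_t\|$, I obtain the coupled system $\delta_t^P\le a/t+b(\delta_t^K)^2$ and $\delta_t^K\le c\,\delta_{t-1}^P+d/t$, which I would close by induction: if $\delta_{t-1}^P\le m/(t-1)$ then $\delta_t^K\le(2cm+d)/t$ and hence $\delta_t^P\le a/t+b(2cm+d)^2/t^2\le m/t$ as soon as $t\ge b(2cm+d)^2/(m-a)$.

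The main obstacle is closing this induction uniformly in $t$: the helpful quadratic term still carries a factor $m^2$, so the self-consistent bound $\delta_t^P\le m/t$ only becomes available once $t$ exceeds a threshold of the same order as the constants, and the circular dependence between the choice of $m$ and this threshold does not resolve on its own. This is exactly the role of the reset step. The reset forces $\delta_{t^\star}^P$ to be $\mathcal{O}(1/t^\star)$ at the single time $t^\star$, and the definition of $t^\star$ in Algorithm~\ref{alg} is engineered to exceed the above threshold, so the induction can be run for all $t\ge t^\star$; the finitely many earlier indices $t<t^\star$ are absorbed using the uniform bound $\delta_t^P\le2\nu$, which is $\le m/t$ once $m\ge2\nu t^\star$. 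Taking $m$ to be the maximum of the two contributions delivers $\|P_{t+1}-P_t\|\le m/t$ for all $t\ge1$, as claimed.
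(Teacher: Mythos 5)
Your proposal is correct and takes essentially the same route as the paper's own proof: the same Lyapunov equation for $P_{t+1}-P_t$ driven by the closed-loop matrix $A_{t+1}$ (the paper's~\eqref{eq6}), the same $\mathcal{O}(1/t)$ drift bounds on $\bar{Q}_t,\bar{R}_t$, a bound of the same form on $\|K_{t+1}-K_t\|$ (the paper uses the exact identity~\eqref{eq8} where you use a resolvent perturbation, which yields the same $c\,\|P_t-P_{t-1}\|+d/t$ estimate), and the same induction whose base case is supplied by the reset step at $t^\star$, with the finitely many earlier indices absorbed into the constant. The only cosmetic difference is that you keep the coupled system in $(\delta^P_t,\delta^K_t)$ rather than substituting the policy-increment identity into the Lyapunov equation before taking norms, as the paper does.
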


\begin{proof}
Note that using~\eqref{eq10}, we have
\begin{align}\label{eq6}
P_{t+1}-P_t=&A_{t+1}^\top(P_{t+1}-P_t)A_{t+1}+K_{t+1}^\top(\bar{R}_{t+1}-\bar{R}_t)K_{t+1}+(\bar{Q}_{t+1}-\bar{Q}_t)\nonumber\\
&-(K_{t+1}-K_t)^\top(B^\top P_t B+\bar{R}_t)(K_{t+1}-K_t).
\end{align}
By the definition of $K_{t}$, we have the following identity:
\begin{align}\label{eq8}
K_{t+1}-K_t=(B^\top P_t B+\bar{R}_t)^{-1}\big[B^\top (P_t - P_{t-1})A_{t}+(\bar{R}_{t-1}-\bar{R}_t)K_t\big].
\end{align}
Using this along with~\eqref{eq6}, we have that
\begin{align}
P_{t+1}-P_t=&A_{t+1}^\top(P_{t+1}-P_t)A_{t+1}+K_{t+1}^\top(\bar{R}_{t+1}-\bar{R}_t)K_{t+1}+(\bar{Q}_{t+1}-\bar{Q}_t)\nonumber\\
&-\big[B^\top (P_t - P_{t-1})A_{t}+(\bar{R}_{t-1}-\bar{R}_t)K_t\big]^\top(B^\top P_t B+\bar{R}_t)^{-1} \\
& \quad \times\big[B^\top (P_t - P_{t-1})A_{t}+(\bar{R}_{t-1}-\bar{R}_t)K_t\big].
\end{align}

By the stability of $K_{t+1}$, we have that
\begin{align}\label{eq7}
P_{t+1}-P_t=&\sum_{i=0}^\infty (A^\top_{t+1})^i M_t A^i_{t+1}\nonumber\\
&\leq \|M_t\|\sum_{i=0}^\infty (A^\top_{t+1})^i A^i_{t+1},
\end{align}
where 
\begin{align*}
M_t=&K_{t+1}^\top(\bar{R}_{t+1}-\bar{R}_t)K_{t+1}+(\bar{Q}_{t+1}-\bar{Q}_t)\\
&-\big[B^\top (P_t - P_{t-1})A_{t}+(\bar{R}_{t-1}-\bar{R}_t)K_t\big]^\top(B^\top P_t B+\bar{R}_t)^{-1}\\
&\quad\times \big[B^\top (P_t - P_{t-1})A_{t}+(\bar{R}_{t-1}-\bar{R}_t)K_t\big].
\end{align*}
Given the strong stability of $K_{t+1}$, we can write $A_{t+1}=H_{t+1}L_{t+1}H_{t+1}^{-1}$. Hence, we have that
\begin{align*}
\|\sum_{i=0}^\infty (A^\top_{t+1})^i A^i_{t+1}\|&\leq \sum_{i=0}^\infty \|(A^\top_{t+1})^i A^i_{t+1}\|\\
&\leq\sum_{i=0}^\infty \|H_{t+1}\|^2\|H_{t+1}^{-1}\|^2\|L_{t+1}\|^{2i}\\
&\leq\sum_{i=0}^\infty \kappa^2 (1-\gamma)^{2i} = \frac{\kappa^2}{1-(1-\gamma)^2}\leq \frac{\kappa^2}{\gamma},
\end{align*}
where we used $\|H_{t+1}\|\|H_{t+1}^{-1}||\leq \kappa$ and $\|L_{t+1}\|\leq 1-\gamma$.
We now proceed to bound $M_t$. We can write
\begin{align}\label{Mbound}
\|M_t\|=&\|K_{t+1}^\top(\bar{R}_{t+1}-\bar{R}_t)K_{t+1}+(\bar{Q}_{t+1}-\bar{Q}_t)\|\nonumber\\
&+\|(B^\top P_t B+\bar{R}_t)^{-1}\|(\|B\|\|A_{t}\|\|P_{t}-P_{t-1}\|+\|(\bar{R}_{t-1}-\bar{R}_t)K_t\|)^2.
\end{align}
Using~\eqref{eq7} and \eqref{Mbound}, we also have
\begin{equation}
z_{t+1}\leq c_t(h_t z_t+d_t)^2+e_{t+1},
\end{equation}
where $z_t=\|P_t-P_{t-1}\|$, and 
\begin{align*}
c_t&=\frac{\kappa^2}{\gamma}\|(B^\top P_t B+\bar{R}_t)^{-1}\|\\
d_t&=\|(\bar{R}_{t-1}-\bar{R}_t)K_t\|\\
e_{t+1}&=\frac{\kappa^2}{\gamma}\|K_{t+1}^\top(\bar{R}_{t+1}-\bar{R}_t)K_{t+1}+(\bar{Q}_{t+1}-\bar{Q}_t)\|\\
h_t&=\|B\|\|A_{t}\|.
\end{align*}
Using the fact that 
\[
\|\bar{Q}_{t+1}-\bar{Q}_t\|=\frac{1}{t+1}\|(Q_t-\bar{Q}_t)\|\leq \frac{2}{t+1}\max_{t\geq 0}\|Q_t\|\leq \frac{2\sigma}{t+1},
\]
along with
\[
\|\bar{R}_{t+1}-\bar{R}_t\|=\frac{1}{t+1}\|(R_t-\bar{R}_t)\|\leq \frac{2}{t+1}\max_{t\geq 0}\|R_t\|\leq \frac{2\sigma}{t+1},
\] 
and \[\|(B^\top P_t B+\bar{R}_t)^{-1}\|\leq (\lambda_{\min} (R_t))^{-1}\leq \mu^{-1},\]
and $\|A_t\|\leq \kappa$, we conclude 
\begin{equation}\label{bbound}
c_t\leq \kappa^2/\gamma\mu, \ d_t\leq \frac{2\sigma\kappa}{t}, \ \mathrm{and} \ e_t\leq \frac{2\kappa^2\sigma(1+\kappa^2)}{\gamma t}, \ h_t \leq \|B\|\kappa,
\end{equation}
for $t \geq 1$.
We next claim that there exists a time $t^{\star}$ and a constant $m>0$ such that $z_{t}\leq m/t$ for all $t> t^\star$. We use an inductive argument to prove this statement. The base case will be proved later. Assume now that $z_{t}\leq m/t$; we show that $z_{t+1}\leq m/(t+1)$. First, note that if 
\[
m\leq \frac{2\sigma}{\|B\|}+\frac{4\kappa^2\sigma(1+\kappa^2)}{\gamma},
\]
for 
$t\geq t^\star=\frac{4\kappa^3\|B\|}{\gamma\mu}(2\sigma\kappa+\frac{2\kappa^3\|B\|\sigma(1+\kappa^2)}{\gamma})+1$, using an elementary calculation, one can observe that 
\begin{align*}
\frac{\kappa^2}{\gamma\mu}(\kappa\|B\|\frac{m}{t}+\frac{2\sigma\kappa}{t})^2+\frac{2\kappa^2\sigma(1+\kappa^2)}{\gamma(t+1)}\leq \frac{m}{t+1}.
\end{align*} 
The claim then follows by noting that 
\begin{align*}
z_{t+1}\leq c_t(h_t z_t+d_t)^2+e_t\leq \frac{\kappa^2}{\gamma\mu}(\kappa\|B\|\frac{m}{t}+\frac{2\sigma\kappa}{t})^2+\frac{2\kappa^2\sigma(1+\kappa^2)}{\gamma(t+1)},
\end{align*} 
where we have used~\eqref{bbound}.

It remains to show that the condition we placed to obtain the last inequality, i.e., that $z_{t^\star+1}\leq m/(t^\star+1)$, is satisfied. To proceed with this, first note that $ t^\star $ is exactly the reset time in Algorithm~\ref{alg}. Also, the evolution of  $\widehat{P}_\ell$ in the reset part of the algorithm is still according to~\eqref{eq6}. Since the matrices $Q_t$ and $R_t$ are fixed in the reset part, $\{\widehat{P}_{\ell}\}$ is a Cauchy sequence. Hence, by choosing $\ell$ large enough, we have that $\|\widehat{P}_{\ell}-\widehat{P}_{\ell-1}\|\leq m/t^\star$, terminating the reset stage of the algorithm; with slight abuse of notation, we let $ \widehat{P}_{\ell} $ be the outcome of the reset part of the algorithm. Note that at time $ t^\star\ $ the algorithm implements $ P_{t^\star}= \widehat{P}_{\ell}$. In the next time step $t^\star+1$, the algorithm updates $P_{t^\star+1}$ as usual, using~\eqref{eq31}. We know by the previous part of the proof that $\|P_{t^\star+1}-P_{t^\star}\|\leq m/(t^\star+1)$, which shows that $z_{t^\star+1}\leq m/(t^\star+1)$ is satisfied. To conclude the proof, note that we can show that $z_{t}\leq \hat{m}/t$, for all $t\geq 1$, simply by selecting $\hat{m}=\max\{m,tz_{t}|t\leq t^\star\}$.
\qed\end{proof}

\begin{corollary}\label{corol}
Let $\widehat{X}_t$ be the steady-state covariance matrix using policy $K_t$ generated by Algorithm~\ref{alg}. Then we have $\|\widehat{X}_t-\widehat{X}_{t-1}\|\leq M/t+M'/t^2$ for some $M>0$ and $M'>0$ and for $t \geq 1$.
\end{corollary}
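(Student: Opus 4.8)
The plan is to differentiate the Lyapunov equations defining $\widehat{X}_t$ and $\widehat{X}_{t-1}$ and solve the resulting discrete Lyapunov equation for the difference, exactly in the spirit of the proof of Lemma~\ref{lem7}. Writing $A_t=A-BK_t$, both steady-state matrices satisfy $\widehat{X}_t=A_t\widehat{X}_tA_t^\top+W$ and $\widehat{X}_{t-1}=A_{t-1}\widehat{X}_{t-1}A_{t-1}^\top+W$. Subtracting these and adding and subtracting the cross term $A_t\widehat{X}_{t-1}A_t^\top$, I would obtain
\[
\widehat{X}_t-\widehat{X}_{t-1}=A_t(\widehat{X}_t-\widehat{X}_{t-1})A_t^\top+E_t,\qquad E_t=A_t\widehat{X}_{t-1}A_t^\top-A_{t-1}\widehat{X}_{t-1}A_{t-1}^\top.
\]
Since Algorithm~\ref{alg} keeps the policies $(\kappa,\gamma)$-strongly stable (Proposition~\ref{str-stability}, using stability from Lemma~\ref{stability} together with the assumed boundedness of $\{P_t\}$), the matrix $A_t$ is conjugate to a contraction and the Lyapunov equation is solved by the convergent series $\widehat{X}_t-\widehat{X}_{t-1}=\sum_{i=0}^\infty A_t^i E_t (A_t^\top)^i$. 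Taking norms and reusing the geometric estimate $\sum_{i=0}^\infty\|A_t^i\|^2\leq\kappa^2/\gamma$ established in Lemma~\ref{lem7}, the problem reduces to bounding $\|E_t\|$.

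The remaining work is to show $\|E_t\|=\mathcal{O}(1/t)$. Setting $D_t=A_t-A_{t-1}=-B(K_t-K_{t-1})$, I would expand
\[
E_t=A_{t-1}\widehat{X}_{t-1}D_t^\top+D_t\widehat{X}_{t-1}A_{t-1}^\top+D_t\widehat{X}_{t-1}D_t^\top,
\]
so that $\|E_t\|\leq 2\|A_{t-1}\|\,\|\widehat{X}_{t-1}\|\,\|D_t\|+\|\widehat{X}_{t-1}\|\,\|D_t\|^2$. Two uniform bounds feed into this. First, $\|\widehat{X}_{t-1}\|$ is uniformly bounded: solving its own Lyapunov equation as $\widehat{X}_{t-1}=\sum_{i=0}^\infty A_{t-1}^i W (A_{t-1}^\top)^i$ and using the same geometric estimate gives $\|\widehat{X}_{t-1}\|\leq(\kappa^2/\gamma)\|W\|\leq\omega\kappa^2/\gamma$, where $\|W\|\leq\Tr(W)=\omega$. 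Second, $\|D_t\|\leq\|B\|\,\|K_t-K_{t-1}\|$, and $\|K_t-K_{t-1}\|=\mathcal{O}(1/t)$ follows from the explicit identity~\eqref{eq8} with $t$ replaced by $t-1$: the prefactor satisfies $\|(B^\top P_{t-1}B+\bar{R}_{t-1})^{-1}\|\leq\mu^{-1}$, one has $\|A_{t-1}\|,\|K_{t-1}\|\leq\kappa$ from strong stability, $\|P_{t-1}-P_{t-2}\|\leq m/(t-2)$ by Lemma~\ref{lem7}, and $\|\bar{R}_{t-2}-\bar{R}_{t-1}\|\leq 2\sigma/(t-1)$ from the averaging update.

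Combining these, $\|D_t\|\leq c/t$ for a constant $c>0$, hence $\|E_t\|\leq M_0/t+M_0'/t^2$, where the linear term comes from the two cross terms and the quadratic term from $D_t\widehat{X}_{t-1}D_t^\top$. Multiplying by the series factor $\kappa^2/\gamma$ yields $\|\widehat{X}_t-\widehat{X}_{t-1}\|\leq M/t+M'/t^2$ with $M=(\kappa^2/\gamma)M_0$ and $M'=(\kappa^2/\gamma)M_0'$, which is the claim. The only genuinely nontrivial step is propagating the $\mathcal{O}(1/t)$ rate from $\{P_t\}$ to $\{K_t\}$ through~\eqref{eq8}; everything else is bookkeeping on the Lyapunov series together with the uniform bounds supplied by strong stability. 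One caveat is that, as in Lemma~\ref{lem7}, these rates are only guaranteed past the reset time $t^\star$, so I would absorb the finitely many earlier terms into the constants $M$ and $M'$ exactly as at the end of the proof of Lemma~\ref{lem7}.
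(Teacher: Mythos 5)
Your proposal is correct and follows essentially the same route as the paper's own proof: the identical Lyapunov-equation decomposition with cross terms $-B(K_t-K_{t-1})$, the bound $\|K_t-K_{t-1}\|=\mathcal{O}(1/t)$ obtained from~\eqref{eq8} together with Lemma~\ref{lem7}, the uniform bound $\|\widehat{X}_{t-1}\|\leq \omega\kappa^2/\gamma$, and the geometric series factor $\kappa^2/\gamma$ to solve for the difference. The only cosmetic difference is where the finitely many pre-reset terms get absorbed (the paper does this inside Lemma~\ref{lem7} via the constant $\hat{m}$, you do it at the end), which does not change the argument.
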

\begin{proof}
By the definition of $\widehat{X}_t$, we have that
\begin{align*}
\widehat{X}_t-\widehat{X}_{t-1}=& A_t \widehat{X}_t A_t^\top - A_{t-1}\widehat{X}_{t-1} A^\top_{t-1}\\
=& A_t (\widehat{X}_t - \widehat{X}_{t-1}) A^\top_t + (A_t-A_{t-1})\widehat{X}_{t-1} (A_t-A_{t-1})^\top \\
&+A_{t-1} \widehat{X}_{t-1}(A_{t}-A_{t-1})^\top+(A_{t}-A_{t-1})\widehat{X}_{t-1}A_{t-1}^\top\\
=& A_t (\widehat{X}_t - \widehat{X}_{t-1}) A^\top_t +B(K_t-K_{t-1})\widehat{X}_{t-1}(K_t-K_{t-1})^\top B^\top\\
&+A_{t-1} \widehat{X}_{t-1}(K_{t-1}-K_{t})^\top B^\top +B(K_{t-1}-K_{t})\widehat{X}_{t-1} A_{t-1}.
\end{align*}
Note that Lemma \ref{lem7} can be used to bound $K_t-K_{t-1}$. Using \eqref{eq8}, we have that
\begin{align}
\|K_{t+1}-K_t\|\leq &\|(B^\top P_t B+\bar{R}_t)^{-1}\|\big[\|B\|\|P_t - P_{t-1}\|\|A_{t}\|+\|\bar{R}_{t-1}-\bar{R}_t\|\|K_t\|\big]\nonumber\\
\leq & \frac{\kappa}{\mu}(\|B\|\hat{m}+2\sigma)/t\label{eq20},
\end{align}
where we have used $\|(B^\top P_t B+\bar{R}_t)^{-1}\|\leq \mu^{-1}$, $\|A_t\|\leq \kappa$, $\|K_t\|\leq \kappa$, and $\hat{m}$ is given in the proof of Lemma~\ref{lem7}. 
Using this $\|\widehat{X}_t - \widehat{X}_{t+1}\| $ is bounded by $M/t+M'/t^2$, where \begin{equation}\label{M}
M=\frac{2\kappa^6\omega}{\mu\gamma^2}\|B\|\big(\|B\|\hat{m}+2\sigma),
\end{equation}
and
\begin{equation}\label{Mpr}
M'=\frac{\kappa^6\omega}{\mu\gamma^2}\|B\|^2(\|B\|\hat{m}+2\sigma)^2,
\end{equation}
where we have used \[\|\widehat{X}_{t-1}\|\leq\|W\|\sum_{i=0}^\infty \|(A_{t-1}^\top)^i (A_{t-1}^i)\|\leq \frac{\omega\kappa^2}{\gamma}\]
\qed\end{proof}
The following lemmas will be used to derive bounds on the redundancy terms~\eqref{eqx1}, \eqref{eqx2}, and \eqref{eqx3}. 
\begin{lemma}\label{lem8}
Suppose that the tuple $(A,B,\{Q_t\}_{t=1}^T,\{R_t\}_{t=1}^T,W)$ satisfies Assumption~\ref{assump}. Suppose that the matrices $P_t$ generated by Algorithm~\ref{alg} are uniformly bounded, i.e., $P_t\leq \nu I$. Let $\kappa=\sqrt{\frac{\nu}{\mu}}$ and $\gamma=1/2\kappa^2$. Then for the covariance matrices $X_t$ and $\widehat{X}_t$, we have
\begin{align*}
\sum_{t=1}^T (Q_t+K_t^\top R_t K_t)\bullet (X_t-\widehat{X}_t)\leq& t^\star \sigma(1+\kappa^2)\max_{0<t\leq t^\star}\|(X_t-\widehat{X}_t)\|\\
&+2\kappa^4\sigma \bigg(\|X_{t^\star}-\widehat{X}_{t^\star}\|\frac{e^{-2\gamma^2 t^\star}}{1-e^{-2\gamma^2}}+\frac{M'\pi^2}{6(1-e^{-2\gamma^2})}\\
&+\frac{M}{1-e^{-2\gamma^2}}\log\Big(\frac{T}{t^\star}\Big)\bigg).
\end{align*}
\end{lemma}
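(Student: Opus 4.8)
The plan is to reduce the entire sum to a weighted sum of the covariance gaps $\|X_t-\widehat{X}_t\|$ and then to control these gaps via the sequential strong stability machinery already developed. First I would record the elementary trace inequality that for a symmetric positive semi-definite $S$ and any symmetric $D$ one has $|S\bullet D|=|\Tr(SD)|\le \Tr(S)\|D\|$ (diagonalize $S=\sum_i\lambda_i v_iv_i^\top$ with $\lambda_i\ge 0$ and note $|v_i^\top D v_i|\le\|D\|$). Applying this with $S=Q_t+K_t^\top R_t K_t$ and $D=X_t-\widehat{X}_t$, and using $\Tr(Q_t)\le\sigma$ together with $\Tr(K_t^\top R_t K_t)=\Tr(R_t K_t K_t^\top)\le\|K_t\|^2\Tr(R_t)\le\kappa^2\sigma$ (where $\|K_t\|\le\kappa$ comes from the $(\kappa,\gamma)$-strong stability furnished by Proposition~\ref{str-stability}), gives the per-term estimate $(Q_t+K_t^\top R_t K_t)\bullet(X_t-\widehat{X}_t)\le\sigma(1+\kappa^2)\|X_t-\widehat{X}_t\|$. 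Hence the full sum is at most $\sigma(1+\kappa^2)\sum_{t=1}^T\|X_t-\widehat{X}_t\|$, which I then split at the reset time $t^\star$.

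For $t\le t^\star$ I would bound the partial sum crudely by $t^\star\max_{0<t\le t^\star}\|X_t-\widehat{X}_t\|$, producing the first term of the claimed bound after multiplying by $\sigma(1+\kappa^2)$. For $t>t^\star$ the key is that, by Lemma~\ref{lem7} and Corollary~\ref{corol}, after the reset the increments satisfy $\eta_t:=\|\widehat{X}_{t+1}-\widehat{X}_t\|\le M/t+M'/t^2$, and $\{K_t\}$ is sequentially $(\kappa,\gamma)$-strongly stable by Proposition~\ref{lem6}; this lets me invoke Lemma~\ref{lem3} with $t^\star$ as the initial time, yielding $\|X_t-\widehat{X}_t\|\le\kappa^2 e^{-2\gamma^2 t}\|X_{t^\star}-\widehat{X}_{t^\star}\|+\kappa^2\sum_s e^{-2\gamma^2 s}\eta_{t-s}$ for $t>t^\star$. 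Summing the transient term over $t>t^\star$ is a geometric series bounded by $\|X_{t^\star}-\widehat{X}_{t^\star}\|\,e^{-2\gamma^2 t^\star}/(1-e^{-2\gamma^2})$.

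The core of the argument is the convolution sum $\sum_{t>t^\star}\sum_s e^{-2\gamma^2 s}\eta_{t-s}$, which I would handle by interchanging the order of summation so that the geometric factor decouples, giving $\sum_s e^{-2\gamma^2 s}\sum_\tau\eta_\tau\le\frac{1}{1-e^{-2\gamma^2}}\sum_{\tau=t^\star}^T\eta_\tau$. Substituting $\eta_\tau\le M/\tau+M'/\tau^2$, the harmonic part contributes $M\log(T/t^\star)$ (bounding $\sum_{\tau=t^\star}^T 1/\tau$ by the integral) and the quadratic part contributes at most $M'\pi^2/6$ (Basel sum). Collecting the $\kappa^2$ from Lemma~\ref{lem3} with the per-term factor and using $\sigma(1+\kappa^2)\le 2\sigma\kappa^2$ (valid since $\kappa\ge 1$ because $\|H\|\|H^{-1}\|\ge 1$) produces the prefactor $2\kappa^4\sigma$ in front of the transient, $M$-, and $M'$-terms, assembling exactly the stated bound. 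I expect the main obstacle to be the bookkeeping in the convolution sum: carefully aligning the summation ranges after the reset and justifying the interchange so that the geometric series and the harmonic/Basel sums separate cleanly. The per-term trace bound and the geometric transient are routine once the sequential strong stability hypotheses of Lemma~\ref{lem3} are verified.
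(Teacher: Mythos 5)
Your proposal is correct and follows essentially the same route as the paper: the same trace inequality $\Tr(S D)\le \Tr(S)\|D\|$ giving the per-term factor $\sigma(1+\kappa^2)$, the same split at the reset time $t^\star$, the same invocation of Lemma~\ref{lem7}, Proposition~\ref{lem6}, Corollary~\ref{corol}, and Lemma~\ref{lem3} (restarted at $t^\star$), and the same interchange of summation to separate the geometric series from the harmonic and Basel sums. The only cosmetic difference is that you bound the inner sum $\sum_{\tau=t^\star}^{T-s}\eta_\tau$ by $\sum_{\tau=t^\star}^{T}\eta_\tau$ before evaluating it, whereas the paper keeps the $T-s$ limit and then uses $\log((T-s)/t^\star)\le\log(T/t^\star)$; these are equivalent.
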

\begin{proof}
For $t\geq t^\star$, we have that $\|P_{t+1}-P_t\|\leq m/t\leq \mu/\kappa^2$. Then, using Proposition~\ref{lem6}, the matrices $K_t$ are sequentially $(\kappa,\gamma)$-strongly stable for $t\geq t^\star$ ($\kappa=\sqrt{\frac{\nu}{\mu}}$ and $\gamma=1/(2\kappa^2)$). Using this by Lemma~\ref{lem3}, we conclude that for $t\geq t^\star$
\begin{align}\label{eq11}
\|X_{t+1}-\widehat{X}_{t+1}\|\leq \kappa^2e^{-2\gamma^2 (t+1-t^\star)}\|X_{t^\star}-\widehat{X}_{t^\star}\|+\kappa^2\sum_{s=0}^{t-t^\star}e^{-2\gamma^2 s}\eta_{t-s};
\end{align}
hence we can separate \eqref{eqx1} 
into two parts as follows:
\begin{align*}
\sum_{t=1}^T (Q_t+K_t^\top R_t K_t)\bullet (X_t-\widehat{X}_t)=&\sum_{t=1}^{t^\star}(Q_t+K_t^\top R_t K_t)\bullet (X_t-\widehat{X}_t)\\
&+\sum_{t=t^\star}^{T}(Q_t+K_t^\top R_t K_t)\bullet (X_t-\widehat{X}_t).
\end{align*}
By stability of policies $K_t$, the matrices $X_t$ and $\widehat{X}_t$ are bounded and we have that
\begin{align}
\sum_{t=1}^{t^\star}(Q_t+K_t^\top R_t K_t)\bullet (X_t-\widehat{X}_t)= & \sum_{t=1}^{t^\star}\Tr\big[(Q_t+K_t^\top R_t K_t)(X_t-\widehat{X}_t)\big]\nonumber\\
\leq & \sum_{t=1}^{t^\star}\Tr(Q_t+K_t^\top R_t K_t)\|(X_t-\widehat{X}_t)\|\nonumber\\
\leq & t^\star \sigma(1+\kappa^2)\max_{0<t\leq t^\star}\|(X_t-\widehat{X}_t)\|\label{equ3},
\end{align}
where we have used $$\Tr(Q_t+K_t^\top R_t K_t)\leq \sigma(1+\kappa^2)$$

Using \eqref{eq11}, we have that
\begin{align*}
\sum_{t=t^\star}^{T}(Q_t+K_t^\top R_t K_t)\bullet (X_t-\widehat{X}_t) \leq & \sum_{t=t^\star}^{T}\Tr(Q_t+K_t^\top R_t K_t)\|(X_t-\widehat{X}_t)\\
\leq & \sum_{t=t^\star}^{T}\sigma(1+\kappa^2)\|X_{t}-\widehat{X}_{t}\|\\
\leq & (\sigma(1+\kappa^2))\kappa^2\sum_{t=t^\star}^{T}\Big(e^{-2\gamma^2 t}\|X_{t^\star}-\widehat{X}_{t^\star}\|+ \sum_{s=0}^{t-t^\star}e^{-2\gamma^2 s}\eta_{t-s}\Big)\\
\leq &2\kappa^4\sigma (\|X_{t^\star}-\widehat{X}_{t^\star}\|\frac{e^{-2\gamma^2 t^\star}}{1-e^{-2\gamma^2}}+\sum_{t=t^\star}^{T}\sum_{s=0}^{t-t^\star}e^{-2\gamma^2 s}\eta_{t-s}).
\end{align*}
Note that by using Corollary \ref{corol}, we have $\eta_t=M/t+M'/t$, where $M$ and $M'$ are given by \eqref{M} and \eqref{Mpr}. Consequently,
\begin{align}
\sum_{t=t^\star}^{T}(Q_t+K_t^\top R_t K_t)\bullet (X_t-\widehat{X}_t) \leq & 2\kappa^4\sigma \|X_{t^\star}-\widehat{X}_{t^\star}\|\frac{e^{-2\gamma^2 t^\star}}{1-e^{-2\gamma^2}}\nonumber\\
&+2\kappa^4\sigma\sum_{t=t^\star}^{T}\sum_{s=0}^{t-t^\star}e^{-2\gamma^2 s}\Big(\frac{M}{t-s}+\frac{M'}{(t-s)^2}\Big)\label{equ1}.
\end{align}
Next, by changing the order of summation we obtain
\begin{align*}
\sum_{t=t^\star}^{T}\sum_{s=0}^{t-t^\star}e^{-2\gamma^2 s}\Big(\frac{M}{t-s}+\frac{M'}{(t-s)^2}\Big)=&\sum_{s=0}^{T-t^\star}e^{-2\gamma^2 s}\sum_{t=s+t^\star}^{T}\Big(\frac{M}{t-s}+\frac{M'}{(t-s)^2}\Big)\\
\leq & \sum_{s=0}^{T-t^\star}e^{-2\gamma^2 s}\Big(M\log\Big(\frac{T-s}{t^\star}\Big)+\frac{M'\pi^2}{6}\Big)\\
\leq & \frac{M'\pi^2}{6(1-e^{-2\gamma^2})}+\sum_{s=0}^{T-t^\star}Me^{-2\gamma^2 s}\log\Big(\frac{T}{t^\star}\Big)\\
\leq & \frac{M'\pi^2}{6(1-e^{-2\gamma^2})}+\frac{M}{1-e^{-2\gamma^2}}\log\Big(\frac{T}{t^\star}\Big), 
\end{align*}
where we have used a logarithmic upper bound for $\sum_{t=t^\star}^{T-s}1/t$ and the identity $\sum_{t=1}^\infty 1/t^2=\pi^2/6$ in the second inequality. The third and fourth inequalities follow by manipulating geometric series.
Therefore, by substituting this inequality in Equation~\eqref{equ1} we obtain 
\begin{align}
\sum_{t=t^\star}^{T}(Q_t+K_t^\top R_t K_t)\bullet (X_t-\widehat{X}_t) \leq &2\kappa^4\sigma \Big(\|X_{t^\star}-\widehat{X}_{t^\star}\|\frac{e^{-2\gamma^2 t^\star}}{1-e^{-2\gamma^2}}+\frac{M'\pi^2}{6(1-e^{-2\gamma^2})}\nonumber\\
&+\frac{M}{1-e^{-2\gamma^2}}\log\Big(\frac{T}{t^\star}\Big)\Big)\label{equ2} 
\end{align}

The result follow by adding \eqref{equ3} and \eqref{equ2}.

\qed\end{proof}
\begin{lemma}\label{lem9}
Suppose that the tuple $(A,B,\{Q_t\}_{t=1}^T,\{R_t\}_{t=1}^T,W)$ satisfies Assumption~\ref{assump}. Suppose that the matrices $P_t$ generated by Algorithm~\ref{alg} are uniformly bounded, i.e., $P_t\leq \nu I$. Let $\kappa=\sqrt{\frac{\nu}{\mu}}$ and $\gamma=1/2\kappa^2$. Then the covariance matrices $\widehat{X}_t$ and $\widehat{X}^\star$ satisfy
\begin{align*}
\sum_{t=1}^T (Q_t+K_t^\top R_t K_t)&\bullet\widehat{X}_t-\sum_{t=1}^T (Q_t+K^{\star\top} R_t K^\star) \bullet\widehat{X}^\star\leq \omega l\hat{m} \\
&+\frac{\kappa^4\omega}{\gamma\mu^3} (\|B\|\hat{m}+2\sigma)^2 (1+\log(T)).
\end{align*}
\end{lemma}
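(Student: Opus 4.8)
The plan is to recognize the quantity in \eqref{eqx2} as the regret of a \emph{follow-the-leader} scheme for a strongly convex sequence of LQR losses, and to bound it by a be-the-leader telescoping together with the $O(1/t)$ slow-variation estimates already in hand. For a stable gain $K$ set $\ell_t(K)=(Q_t+K^\top R_t K)\bullet\widehat{X}_K$, where $\widehat{X}_K$ solves $\widehat{X}_K=(A-BK)\widehat{X}_K(A-BK)^\top+W$. Exactly as in the computation showing that \eqref{eqx4} is negative, one has $\ell_t(K)=\widetilde{P}_t(K)\bullet W$ with $\widetilde{P}_t(K)=(A-BK)^\top\widetilde{P}_t(K)(A-BK)+Q_t+K^\top R_t K$, so that $\sum_{s=1}^{t}\ell_s(K)=t\,(\bar{Q}_t+K^\top\bar{R}_t K)\bullet\widehat{X}_K$ is, up to the factor $t$, the steady-state LQR cost of the fixed gain $K$ for the averaged pair $(\bar{Q}_t,\bar{R}_t)$, and is therefore minimized over stable $K$ by the corresponding DARE solution (the minimality being exactly the $P^\star\preceq P$ fact from \cite{GH:71} used for \eqref{eqx4}). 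In particular $K^\star$ minimizes $\sum_{s=1}^{T}\ell_s$, and \eqref{eqx2} equals $\sum_{t=1}^{T}\big(\ell_t(K_t)-\ell_t(K^\star)\big)$, the follow-the-leader regret of the policies produced by Algorithm~\ref{alg}.

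First I would introduce the exact leaders $w_{t}=\argmin_K\sum_{s=1}^{t-1}\ell_s(K)$, i.e.\ the DARE gain for $(\bar{Q}_{t-1},\bar{R}_{t-1})$, so that $K^\star=w_{T+1}$. The be-the-leader inequality gives $\sum_{t}\ell_t(w_{t+1})\le\sum_{t}\ell_t(K^\star)$, whence
\[
\sum_{t=1}^{T}\big(\ell_t(K_t)-\ell_t(K^\star)\big)\le\sum_{t=1}^{T}\big(\ell_t(K_t)-\ell_t(w_{t+1})\big)=\sum_{t=1}^{T}\big(\ell_t(K_t)-\ell_t(w_t)\big)+\sum_{t=1}^{T}\big(\ell_t(w_t)-\ell_t(w_{t+1})\big).
\]
The first sum is the tracking error of the iterate $K_t$ against the exact leader $w_t$, and the second is the one-step stability of the leader sequence. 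This telescoping is the crucial step: a naive termwise bound on \eqref{eqx2} is only $O(T)$, because neither $K_t$ nor $\widehat{X}_t$ need be close to $K^\star$, $\widehat{X}^\star$ for small $t$; it is precisely the cancellation encoded in the be-the-leader inequality, leaving only consecutive differences, that yields the logarithmic rate.

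Next I would bound each summand by a local Lipschitz estimate in the gain. Writing $\ell_t(K)-\ell_t(K')=(\widetilde{P}_t(K)-\widetilde{P}_t(K'))\bullet W$ and expanding the difference through the resolvent $\sum_{i\ge0}(A_K^\top)^i(\cdot)A_K^i$ as in the proof of Lemma~\ref{stability}, I would use the strong-stability norm bounds $\sum_{i\ge0}\|A_t^i\|^2\le\kappa^2/\gamma$, $\|\widehat{X}_t\|\le\omega\kappa^2/\gamma$, $\|(B^\top P_tB+\bar{R}_t)^{-1}\|\le\mu^{-1}$ and $\|K_t\|,\|A_t\|\le\kappa$ — all valid once $\{K_t\}$ is (sequentially) strongly stable, by Propositions~\ref{str-stability} and~\ref{lem6} — to obtain a local Lipschitz bound $|\ell_t(K)-\ell_t(K')|\le c\,\|K-K'\|$ for the gains in play. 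Combining this with the $O(1/t)$ drift $\|w_{t+1}-w_t\|$ of the exact leaders (inherited from $\|\bar{Q}_t-\bar{Q}_{t-1}\|,\|\bar{R}_t-\bar{R}_{t-1}\|\le 2\sigma/t$ and the Lipschitz dependence of the DARE map on its cost matrices) and with the iterate bound $\|K_{t+1}-K_t\|\le\frac{\kappa}{\mu}(\|B\|\hat{m}+2\sigma)/t$ from~\eqref{eq20}, each summand is of order $\frac{\kappa^4\omega}{\gamma\mu^3}(\|B\|\hat{m}+2\sigma)^2/t$. Summing over $t^\star<t\le T$ through $\sum_{t}1/t\le 1+\log T$ produces the $\frac{\kappa^4\omega}{\gamma\mu^3}(\|B\|\hat{m}+2\sigma)^2(1+\log T)$ term, while the finitely many indices $t\le t^\star$ together with the reset account for the constant $\omega l\hat{m}$.

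The hard part will be making the approximate-leader step rigorous, i.e.\ controlling $\sum_t(\ell_t(K_t)-\ell_t(w_t))$, equivalently $\|K_t-w_t\|$ up to the Lipschitz factor. The iterate $K_t$ is only a single Newton--Hewer step from $P_{t-1}$ rather than the exact DARE gain for $(\bar{Q}_{t-1},\bar{R}_{t-1})$, so I would invoke the quadratic convergence of the Newton--Hewer iteration \cite{GH:71} together with the $O(1/t)$ drift of the averaged costs to argue that, once the reset at $t^\star$ has driven $P_{t^\star}$ to the running DARE solution, the tracking error stays $O(1/t)$ and is therefore absorbed into the same $O(\log T)$ budget. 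This is exactly where the reset step, the uniform boundedness of $\{P_t\}_{t\ge1}$, and Lemma~\ref{lem7} are indispensable, and where the failure of monotonicity of the Newton--Hewer updates \cite{MA-BG-TL:20} blocks a softer comparison argument; keeping all constants consistent with $\kappa,\gamma,\mu,\omega,\sigma,\|B\|,\hat{m}$ is the remaining bookkeeping.
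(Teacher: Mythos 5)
Your reduction of \eqref{eqx2} to a follow-the-leader regret is sound and genuinely different from the paper's argument. The identification $\ell_t(K)=\widetilde{P}_t(K)\bullet W$, the fact that the running minimizer of $\sum_{s\le t}\ell_s$ is the DARE gain for $(\bar{Q}_t,\bar{R}_t)$ (via Hewer's comparison $P^\star\preceq P$), the observation that $K^\star=w_{T+1}$, and the be-the-leader telescoping are all correct. The paper instead proves an \emph{exact} algebraic identity: combining \eqref{eq10} with $Q_t=t\bar{Q}_t-(t-1)\bar{Q}_{t-1}$ and the steady-state relation $A_t\widehat{X}_tA_t^\top=\widehat{X}_t-W$, it shows that $(Q_t+K_t^\top R_tK_t)\bullet\widehat{X}_t$ equals $tP_t\bullet W-(t-1)P_{t-1}\bullet W$ plus a term quadratic in $(K_t-K_{t-1})$, so the sum collapses to $TP_T\bullet W$ plus a remainder whose $t$-th term is $O(1/t)$ by \eqref{eq20}; subtracting $TP^\star\bullet W$ and bounding $T(P_T-P^\star)\bullet W\le\omega l\hat{m}$ by the frozen-cost quadratic-convergence argument gives the claim. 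What the paper's identity buys is that no first-order (Lipschitz) estimates in the gain are ever needed; what your route buys is conceptual clarity (the term \emph{is} an FTL regret) and potential generality beyond the specific Newton--Hewer structure.

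However, two of your steps are asserted rather than proved, and both require real work: (i) the uniform Lipschitz bound $|\ell_t(K)-\ell_t(K')|\le c\|K-K'\|$ over all gains in play, which needs the leaders $w_t$ to be uniformly strongly stable as well (this is true: Hewer's theorem gives $\widetilde{P}_t^\star\preceq P_t\preceq\nu I$ for the DARE solution $\widetilde{P}_t^\star$ associated with $(\bar{Q}_t,\bar{R}_t)$, and the argument of Proposition~\ref{str-stability} then applies); and (ii) the tracking bound $\|K_t-w_t\|=O(1/t)$ and leader drift $\|w_{t+1}-w_t\|=O(1/t)$. For (ii), rather than invoking an unproved Lipschitz dependence of the DARE map on its cost matrices, note that the paper's own device suffices: freezing the costs at $(\bar{Q}_t,\bar{R}_t)$ and continuing the iteration gives $\|P_t-\widetilde{P}_t^\star\|\le l\hat{m}/t$ for every $t\ge t^\star$ (this is exactly how \eqref{eq13} is obtained at $t=T$), which yields the tracking bound, and the leader drift then follows from the triangle inequality $\|w_{t+1}-w_t\|\le\|w_{t+1}-K_{t+1}\|+\|K_{t+1}-K_t\|+\|K_t-w_t\|$ together with \eqref{eq20}. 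Finally, be aware that this route proves $O(\log T)$ with constants different from those in the statement; in particular, your accounting that the rounds $t\le t^\star$ produce the constant $\omega l\hat{m}$ is not how that constant arises---in the paper it comes from $T(P_T-P^\star)\bullet W$, i.e., precisely from comparing the final iterate with the final leader, which is the one term your be-the-leader cancellation is designed to remove.
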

\begin{proof}
Using the fact that $Q_t=t\bar{Q}_t-(t-1)\bar{Q}_{t-1}$ and $R_t=t\bar{R}_t-(t-1)\bar{R}_{t-1}$, we have
\begin{align}
(Q_t+K_t^\top R_t K_t)=&(t\bar{Q}_t-(t-1)\bar{Q}_{t-1})+K_t^\top (t\bar{R}_t-(t-1)\bar{R}_{t-1}) K_t\nonumber\\
=&t(\bar{Q}_t+K_t^\top\bar{R}_t K_t)-(t-1)(\bar{Q}_{t-1}+K_t^\top\bar{R}_{t-1} K_t)\nonumber\\
=&t(P_t-A_t^\top P_t A_t)-(t-1)(P_{t-1}-A_{t}^\top P_{t-1}A_t)\nonumber\\
&+(t-1)(K_t-K_{t-1})^\top (B^\top P_{t-1}B+\bar{R}_{t-1})^{-1}(K_t-K_{t-1})\label{equ4},
\end{align}
where we have used~\eqref{eq31} and~\eqref{eq10} in the third equality. Note that
\begin{align}
A_t^\top P_t A_t\bullet \widehat{X}_t=&\Tr(A_t^\top P_t A_t\widehat{X}_t)\nonumber\\
=&\Tr(P_t A_t \widehat{X}_t A_t^\top)\nonumber\\
=&P_t \bullet A_t \widehat{X}_t A_t^\top \nonumber\\
=&P_t\bullet (\widehat{X}_t-W)\nonumber\\
=&P_t \bullet \widehat{X}_t-P_t \bullet W\label{equ5}.
\end{align}
Therefore, by multiplying~\eqref{equ4} and $\widehat{X}_t$ we obtain
\begin{align*}
(Q_t+K_t^\top R_t K_t)\bullet \widehat{X}_t=&t P_t\bullet W-(t-1)P_{t-1}\bullet W \\
&+(t-1)(K_t-K_{t-1})^\top (B^\top P_{t-1}B+\bar{R}_{t-1})^{-1}(K_t-K_{t-1})\bullet \widehat{X}_t,
\end{align*}
where we have used \eqref{equ5} to cancel out some terms.
Summing over $t$ and using the telescopic series for $t P_t\bullet W-(t-1)P_{t-1}\bullet W$, we obtain
\begin{align}
\sum_{t=1}^T (Q_t+&K_t^\top R_t K_t)\bullet \widehat{X}_t\leq TP_T\bullet W \nonumber\\
&+\sum_{t=1}^T(t-1)(K_t-K_{t-1})^\top (B^\top P_{t-1}B+\bar{R}_{t-1})^{-1}(K_t-K_{t-1})\bullet \widehat{X}_t\label{equ6}.
\end{align}
On the other hand,
\begin{align}
\sum_{t=1}^T (Q_t+K^{\star\top} R_t K^\star)\bullet\widehat{X}^\star=&T (\bar{Q}_T+K^{\star\top} \bar{R}_T K^\star)\bullet \widehat{X}^\star \nonumber\\
=&T (P^\star-{A^\star}^\top P^\star A^\star)\bullet \widehat{X}^\star \nonumber\\
=&T (P^\star\bullet\widehat{X}^\star-P^\star \bullet A^\star \widehat{X}^\star {A^\star}^\top)\nonumber\\
=&T(P^\star\bullet\widehat{X}^\star-P^\star \bullet\widehat{X}^\star +P^\star \bullet W)\nonumber\\
=&TP^\star \bullet W\label{equ7}.
\end{align}
Therefore, by subtracting \eqref{equ7} from \eqref{equ6} we have
\begin{align*}
\sum_{t=1}^T& (Q_t+K_t^\top R_t K_t)\bullet \widehat{X}_t-\sum_{t=1}^T (Q_t+K^{\star\top} R_t K^\star)\bullet\widehat{X}^\star=T(P_T-P^\star)\bullet W+\\
&+\sum_{t=1}^T(t-1)(K_t-K_{t-1})^\top (B^\top P_{t-1}B+\bar{R}_{t-1})^{-1}(K_t-K_{t-1})\bullet \widehat{X}_t.
\end{align*}
Note that $P^\star$ is the solution of DARE when the cost matrices $Q_t=\bar{Q}_T$ and $R_t=\bar{R}_T$ are chosen to be fixed; it is the limit of the sequence $P_t$ when $Q_t$ and $R_t$ are chosen to be $\bar{Q}_T$ and $\bar{R}_T$, respectively. The rate of convergence is quadratic~\cite{GH:71}, 
i.e. there exists $C>0$ such that for all $t\geq 2$,
\begin{equation}\label{quadconv}
\|P_t-P^\star\|\leq C \|P_{t-1}-P^\star\|^2
\end{equation} 
and by a similar analysis, we also have 
\begin{equation}\|P_{t+1}-P_t\|\leq C \|P_{t}-P_{t-1}\|^2.
\end{equation} 
Here we use a similar technique to bound $\|P_T-P^\star\|$. We can update the sequence $P_t$ after time $T$ by starting at $P_T$ using~\eqref{eq31}, with $\bar{Q}_t=\bar{Q}_T$ and $\bar{R}_t=\bar{R}_T$ fixed for $t\geq T$. 
We hence have that
\begin{align*}
\|P_T-P^\star\|=&\lim_{t\to\infty}\|P_T-P_{t}\|\\
\leq&\lim_{t\to\infty}\sum_{i=0}^{t-1}\|P_{T+i}-P_{T+i+1}\|\\
\leq&\lim_{t\to\infty}\sum_{i=0}^{t-1}C^{2^i-1}\|P_{T}-P_{T+1}\|^{2^i}\\
=&\|P_{T}-P_{T+1}\|\lim_{t\to\infty}\sum_{i=0}^{t-1}C^{2^i-1}\|P_{T}-P_{T+1}\|^{2^i-1},
\end{align*}
where we have used~\eqref{quadconv}. 
For $T\geq t^\star$, $C\|P_{T}-P_{T+1}\|<1$ and thus the sum $\sum_{i=0}^{\infty}C^{2^i-1}\|P_{T}-P_{T+1}\|^{2^i-1}$  is bounded by some finite value $l>0$.
Hence we have 
\begin{align}
T(P_T-P^\star)\bullet W\leq T\omega\|P_T-P^\star\|\leq T\omega l \|P_{T}-P_{T+1}\|\leq \frac{T\omega l\hat{m}}{T}=\omega l\hat{m}\label{eq13},
\end{align}
where we have used $\omega=\Tr(W)$ and $\|P_{T}-P_{T+1}\|\leq \hat{m}/T$ by Lemma~\ref{lem7}.
We now proceed by noting that
\begin{align}
\sum_{t=2}^T(t-1)&(K_t-K_{t-1})^\top (B^\top P_{t-1}B+\bar{R}_{t-1})^{-1}(K_t-K_{t-1})\bullet \widehat{X}_t\nonumber\\ \leq&\sum_{t=2}^T(t-1)\Tr(\widehat{X}_t)\frac{1}{(t-1)^2\mu^3}(\|B\|\kappa\hat{m}+2\sigma\kappa)^2\nonumber\\
\leq&\frac{\kappa^2}{\gamma}\omega (\|B\|\kappa\hat{m}+2\sigma\kappa)^2\sum_{t=2}^T\frac{1}{(t-1)\mu^3}\nonumber\\
\leq & \frac{\kappa^4\omega}{\gamma\mu^3} (\|B\|\hat{m}+2\sigma)^2 (1+\log(T))\label{eq12},
\end{align}
where we have used the bound in~\eqref{eq20} on $\|K_t-K_{t-1}\|$, and the bound for $\Tr(\widehat{X}_t)$. 
Adding \eqref{eq12} and \eqref{eq13} completes the proof.
\qed\end{proof}

\begin{lemma}\label{lem10}
Suppose that the tuple $(A,B,\{Q_t\}_{t=1}^T,\{R_t\}_{t=1}^T,W)$ satisfies Assumption~\ref{assump}. Suppose that the matrices $P_t$ generated by Algorithm~\ref{alg} are uniformly bounded, i.e., $P_t\leq \nu I$. Let $\kappa=\sqrt{\frac{\nu}{\mu}}$ and $\gamma=1/2\kappa^2$. Then we have 
\begin{equation}\label{eq41}
\sum_{t=1}^T (Q_t+K^{\star\top} R_t K^\star)\bullet (\widehat{X}^\star-{X}_t^\star)\leq  \frac{\sigma(1+\kappa^2)\kappa^2}{1-e^{-2\gamma}}\|\widehat{X}^\star-{X}_1^\star\|.
\end{equation}
\end{lemma}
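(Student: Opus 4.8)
The plan is to exploit that $K^\star$ is a \emph{fixed} policy, so that the associated state covariances converge geometrically to their steady state, after which each summand can be controlled by an elementary trace inequality and the resulting geometric series summed.

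First I would verify that $K^\star$ is $(\kappa,\gamma)$-strongly stable with exactly the parameters used by the algorithm, so that Lemma~\ref{stabconv} becomes applicable. Since $P^\star$ solves the fixed-cost equation $P^\star=(A-BK^\star)^\top P^\star(A-BK^\star)+\bar{Q}_T+K^{\star\top}\bar{R}_T K^\star$ with $\bar{Q}_T,\bar{R}_T\succeq\mu I$, and since $P^\star$ is the limit of the iterates with costs frozen at $\bar{Q}_T,\bar{R}_T$, the uniform boundedness assumption gives $\mu I\preceq P^\star\preceq\nu I$; the lower bound is obtained exactly as in the proof of Proposition~\ref{str-stability}. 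Repeating the argument of Proposition~\ref{str-stability} verbatim for $P^\star$ then shows that $K^\star$ is $(\kappa,\gamma)$-strongly stable with $\kappa=\sqrt{\nu/\mu}$ and $\gamma=1/(2\kappa^2)$.

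Next, since the controller runs the single fixed policy $K^\star$, Lemma~\ref{stabconv} yields the geometric decay
\[
\|X_t^\star-\widehat{X}^\star\|\leq \kappa^2 e^{-2\gamma(t-1)}\|X_1^\star-\widehat{X}^\star\|
\]
for all $t\geq 1$ (the case $t=1$ holding because $\kappa\geq 1$, as $\mu\leq\nu$). To bound each summand I would use that $Q_t+K^{\star\top}R_t K^\star\succeq 0$, so that for any symmetric $S$ one has $\Tr[(Q_t+K^{\star\top}R_t K^\star)S]\leq\Tr(Q_t+K^{\star\top}R_t K^\star)\|S\|$. Combining this with $\Tr(Q_t)\leq\sigma$ together with $\Tr(K^{\star\top}R_t K^\star)=\Tr(R_t K^\star K^{\star\top})\leq\|K^\star\|^2\,\Tr(R_t)\leq\kappa^2\sigma$ gives
\[
(Q_t+K^{\star\top}R_t K^\star)\bullet(\widehat{X}^\star-X_t^\star)\leq\sigma(1+\kappa^2)\|\widehat{X}^\star-X_t^\star\|.
\]

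Finally I would sum over $t$, insert the geometric bound, and evaluate $\sum_{t=1}^{T}e^{-2\gamma(t-1)}\leq 1/(1-e^{-2\gamma})$, which produces the stated constant $\sigma(1+\kappa^2)\kappa^2/(1-e^{-2\gamma})$ multiplying $\|\widehat{X}^\star-X_1^\star\|$. The only nonroutine point is the first step: ensuring that the fixed DARE policy $K^\star$ inherits strong stability with the \emph{algorithm's} parameters $\kappa,\gamma$, which is what makes Lemma~\ref{stabconv} usable here. Everything after that is a pair of elementary trace and geometric-series estimates, in sharp contrast to the time-varying summation bookkeeping required in Lemmas~\ref{lem8} and~\ref{lem9}, where the steady states $\widehat{X}_t$ themselves drift with $t$.
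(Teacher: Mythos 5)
Your proposal is correct and follows essentially the same route as the paper's proof: bound the trace factor by $\Tr(Q_t+K^{\star\top}R_tK^\star)\leq\sigma(1+\kappa^2)$, invoke Lemma~\ref{stabconv} for the fixed policy $K^\star$ to get geometric decay of $\|\widehat{X}^\star-X_t^\star\|$, and sum the geometric series. The only difference is that you explicitly verify that $K^\star$ is $(\kappa,\gamma)$-strongly stable with the algorithm's parameters (via $\mu I\preceq P^\star\preceq \nu I$ and the argument of Proposition~\ref{str-stability}), a step the paper takes for granted when it asserts $\|K^\star\|\leq\kappa$ and applies Lemma~\ref{stabconv}.
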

\begin{proof}
$\|K^\star\|\leq \kappa$ implies that $\Tr(Q_t+K^{\star\top} R_t K^\star)\leq \sigma(1+\kappa^2)$. Moreover, by Lemma \ref{stabconv}, we have that
\begin{align*}
\sum_{t=1}^T (Q_t+K^{\star\top} R_t K^\star)\bullet (\widehat{X}^\star-{X}_t^\star)\leq & \sigma(1+\kappa^2)\sum_{t=1}^T \|\widehat{X}^\star-{X}_t^\star\| \\
\leq &  \sigma(1+\kappa^2)\sum_{t=1}^T \kappa^2 e^{-2\gamma (t-1)}\|\widehat{X}^\star-{X}_1^\star\|\\
\leq &  \sigma(1+\kappa^2)\kappa^2 \frac{1}{1-e^{-2\gamma}}\|\widehat{X}^\star-{X}_1^\star\|,
\end{align*}
as claimed. 
\qed\end{proof}

To conclude, by summing the right hand side of~\eqref{equ3}, \eqref{equ2}, \eqref{eq12}, \eqref{eq13}, and \eqref{eq41}, we obtain the regret bound as follows,
\begin{align}
R(T)\leq&
\Big(2\kappa^4\sigma\frac{M}{1-e^{-2\gamma^2}}
+\frac{\kappa^4\omega}{\gamma\mu^3} (\|B\|\hat{m}+2\sigma)^2\Big)
\log(T)\nonumber\\
&-2\kappa^4\sigma\frac{M}{1-e^{-2\gamma^2}}\log(t^\star)+
 t^\star \sigma(1+\kappa^2)\max_{0<t\leq t^\star}\|(X_t-\widehat{X}_t)\|\nonumber\\
&+2\kappa^4\sigma \Big(\|X_{t^\star}-\widehat{X}_{t^\star}\|\frac{e^{-2\gamma^2 t^\star}}{1-e^{-2\gamma^2}}+\frac{M'\pi^2}{6(1-e^{-2\gamma^2})}\Big)
+\omega l\hat{m}\nonumber \\
&+\frac{\kappa^4\omega}{\gamma\mu^3} (\|B\|\hat{m}+2\sigma)^2
+ \frac{\sigma(1+\kappa^2)\kappa^2}{1-e^{-2\gamma}}\|\widehat{X}^\star-{X}_1^\star\|\label{eq:thm},
\end{align}
which finishes the proof of Theorem~\ref{theorem:main}

Note that the assumption of $(\kappa,\gamma)$-strongly stability in Theorem~\ref{theorem:main} will be satisfied as long as the solutions to the online Riccati equation are uniformly bounded. In particular, we do not need this assumption for the scalar case, see Proposition~\ref{prop1}.

\section{Simulation Results}
We provide simulation results for the proposed algorithm to illustrate its performance. The control system dynamics are given by $x_{t+1}=Ax_t+Bu_t+w_t$, where the pair $(A,B)$ is stabilizable, and $A\in\real^{10\times 10}$ and $B\in\real^{10\times 7}$, and $w_t$ is a Gaussian noise. The matrices $A$ and $B$ are chosen randomly with entry-wise i.i.d uniform distribution on $[-3,3]$ and $[-2,2]$ respectively. We have considered three scenarios for the cost functions. For the first experiment, the matrices $Q_t$ and $R_t$ are generated randomly with the Wishart distribution with unit variance and $20$ degrees of freedom. For the second and third experiment, we followed the experiment setting of~\cite{AC-AH-TK-NL-YM-KT:18}, where $Q_t=Q$ is fixed as the identity matrix, while $R_t$ is diagonal where some diagonal entries are $1$, while others are $r_t$. For the second experiment, we assume that $r_t$ is randomly changing over time with i.i.d uniform distribution on $[0.1, 1]$, and for the third experiment, we assume that $r_t$ is  changing over time according to a random walk restricted to $[0.1, 1]$ taking steps of size $0.1, -0.1, 0$, with probability $0.1, 0.1, 0.8$, respectively. We ran the algorithm with the stable matrix $K_0$ and $X_0=0$ to generate a sequence of matrices $K_t$, and we computed the regret and the average regret over time. We compared our results with the ones stated in~\cite{AC-AH-TK-NL-YM-KT:18}. 

Figure~\ref{exp1-regret} shows the regret over time for the online Riccati algorithm and the follow the lazy leader (FLL) algorithm given in~\cite{AC-AH-TK-NL-YM-KT:18} for the first experiment. The results show that both algorithms behave similarly. Although \cite{AC-AH-TK-NL-YM-KT:18} found a regret bound of $\mathcal{O}(\sqrt{T})$ while we have achieved a regret bound of $\mathcal{O}(\log{T})$, this simulation result is expected, since FLL uses the average cost matrices $Q_t$ and $R_t$ over time and finds the optimal $K_t$ and uses it for the next time step, and the online Riccati algorithm uses a Riccati update of the average cost matrices $Q_t$ and $R_t$ over time.

Figure~\ref{exp1-ave-regret},~\ref{exp2-ave-regret} and~\ref{exp3-ave-regret} show the average regret of the online Riccati algorithm, FLL algorithm, and the recent cost policy, where the optimal policy of recent cost matrices is used for the next time step. The graphs show that the online Riccati algorithm works well for different scenarios, and as expected the recent cost policy is not a good strategy and only works for the random walk scenario where the change in the cost function is slow, as also indicated in~\cite{AC-AH-TK-NL-YM-KT:18}, and we have plotted these for comparison.


\begin{figure*}[htb!]
 \centering
 {
    \includegraphics[width=.8\linewidth]{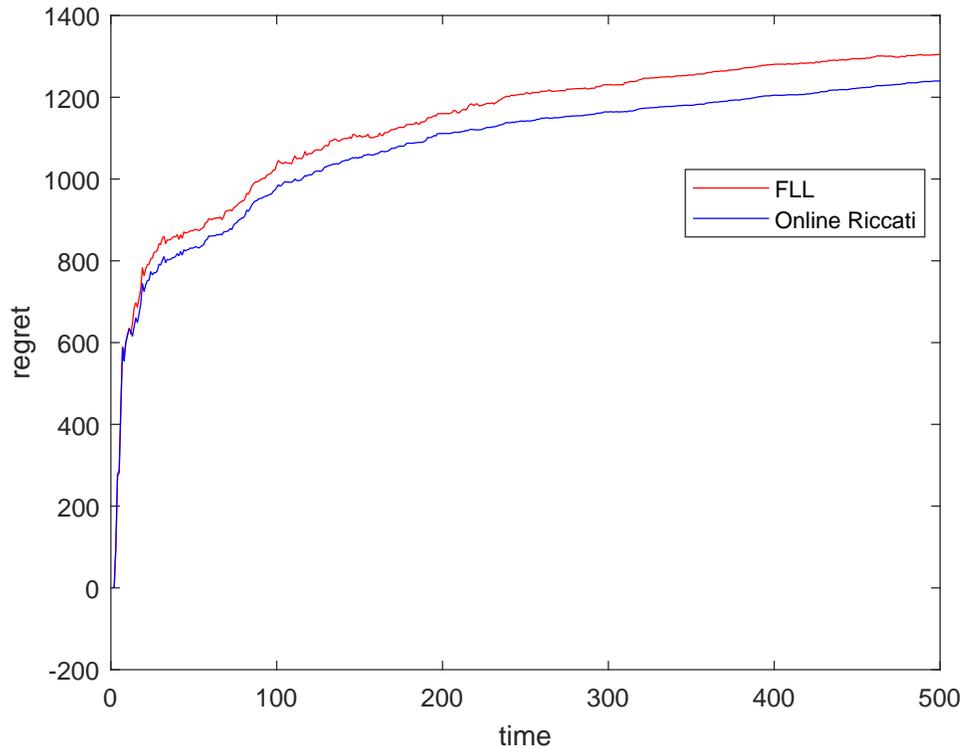}
\caption{The regret over time using the policies generated by Algorithm~\ref{alg} and FLL algorithm}\label{exp1-regret}}
\end{figure*}

\begin{figure*}[htb!]
 \centering
 {
    \includegraphics[width=.8\linewidth]{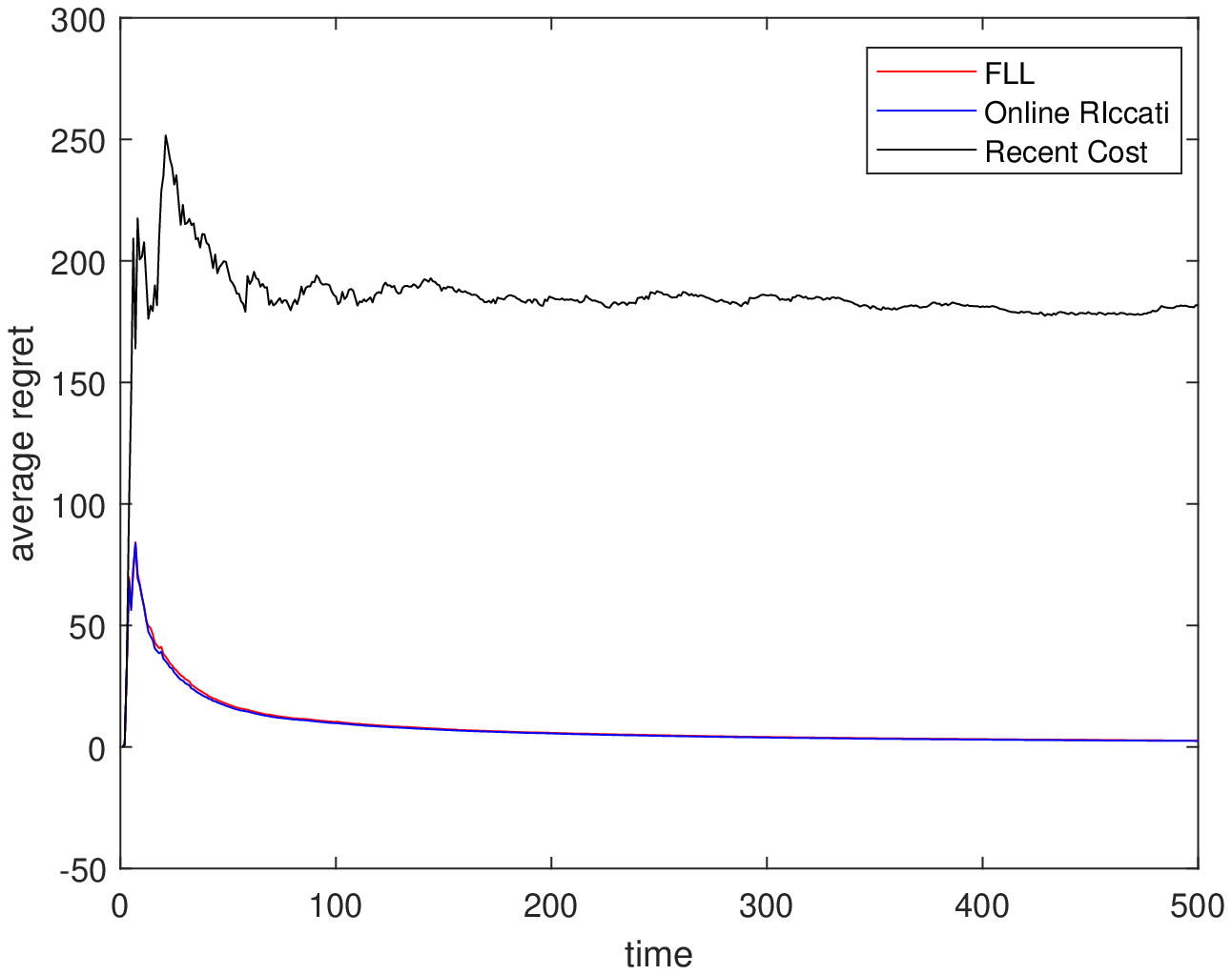}
\caption{The average regret over time using the policies generated by Algorithm~\ref{alg}, FLL algorithm and recent cost policy for the first experiment}
\label{exp1-ave-regret}}
\end{figure*}

\begin{figure*}[htb!]
 \centering
 {
    \includegraphics[width=.8\linewidth]{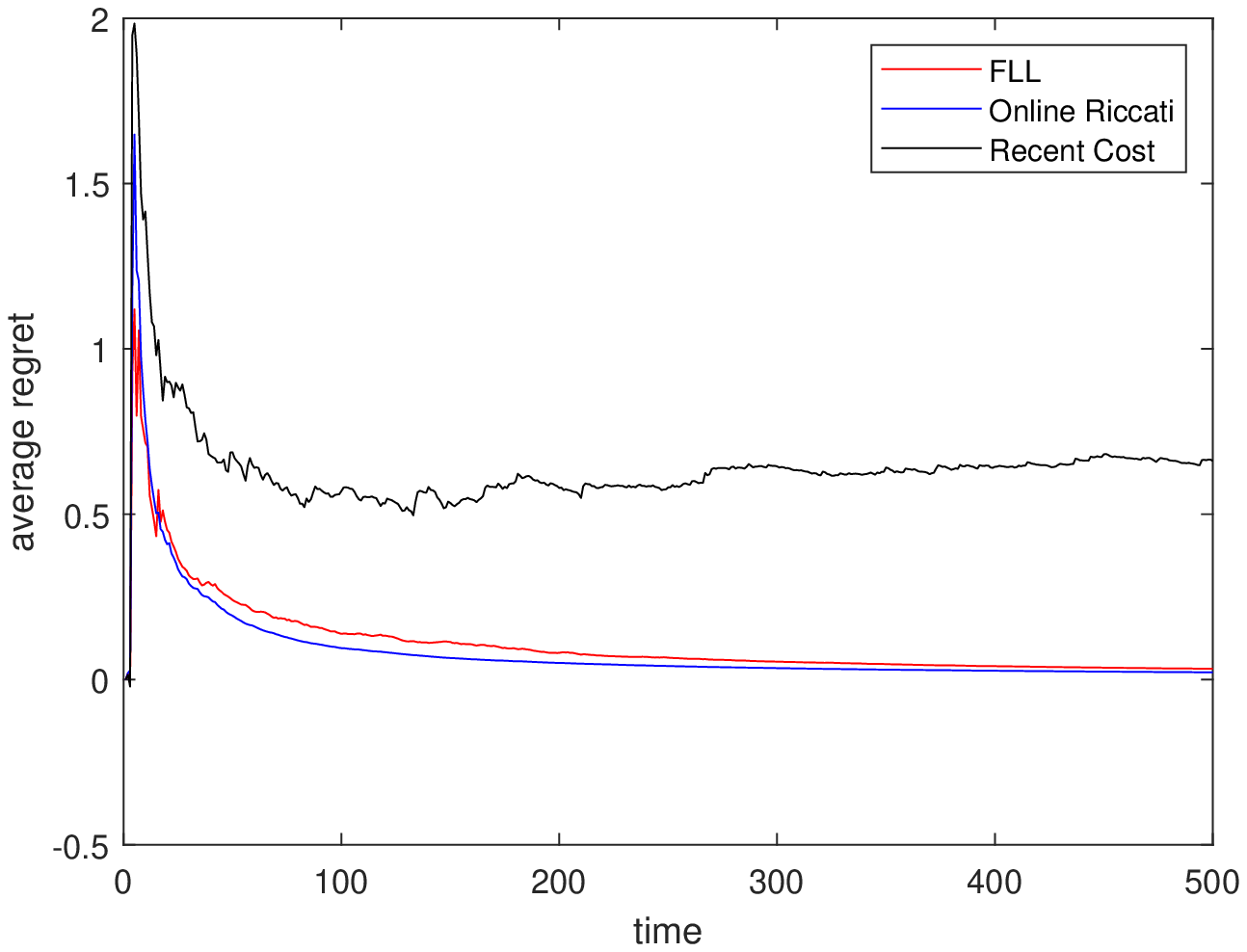}
\caption{The average regret over time using the policies generated by Algorithm~\ref{alg}, FLL algorithm and recent cost policy for the second experiment}
\label{exp2-ave-regret}}
\end{figure*}

\begin{figure*}[htb!]
 \centering
 {
    \includegraphics[width=.8\linewidth]{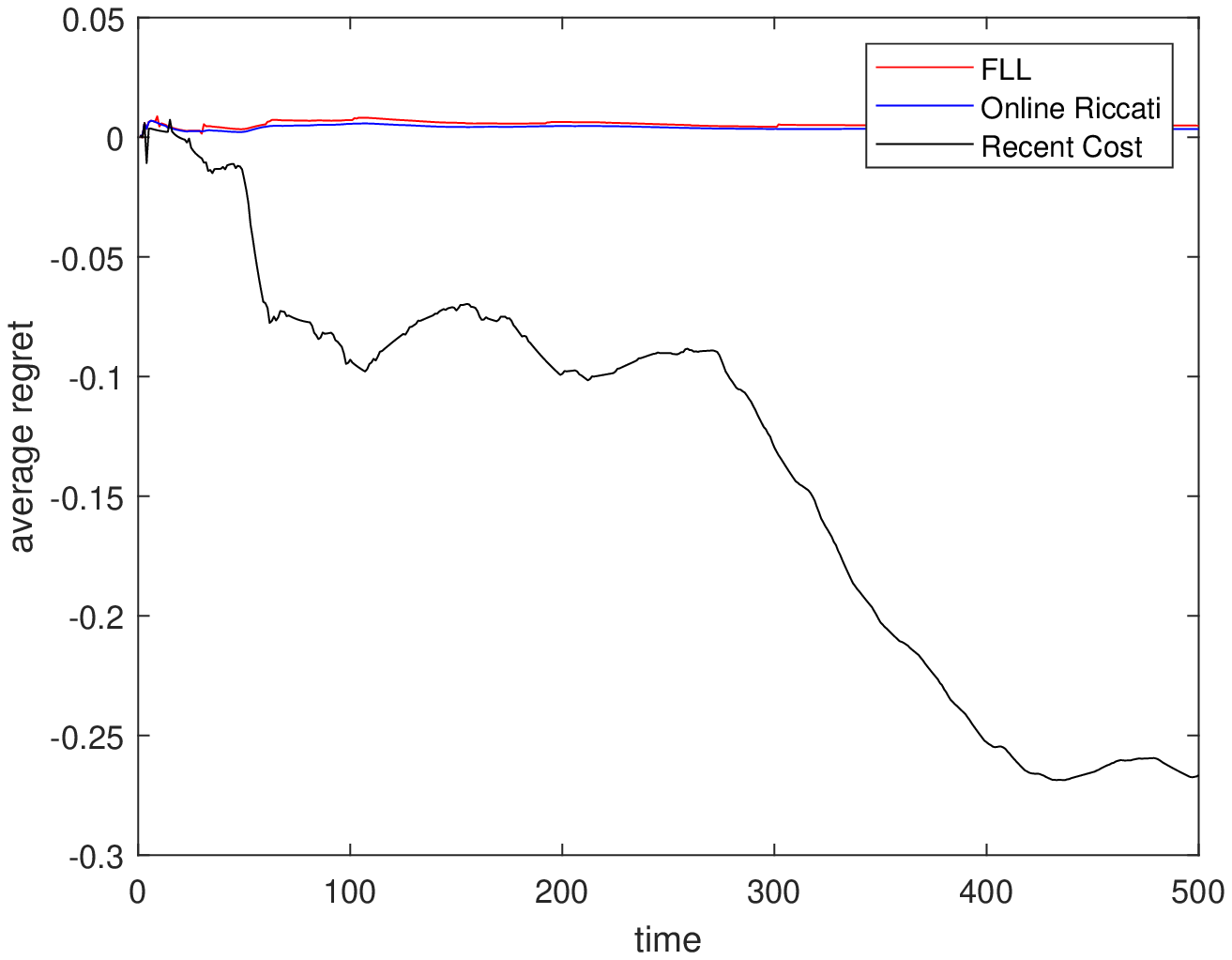}
\caption{The average regret over time using the policies generated by Algorithm~\ref{alg}, FLL algorithm and recent cost policy for the third experiment}
\label{exp3-ave-regret}}
\end{figure*}

\bibliographystyle{ieeetr}
\bibliography{alias,JC,BG,Main,MA-add,Main-add,Group-bib}

\appendix
\section{Appendix}

\newcounter{mycounter}
\renewcommand{\themycounter}{A.\arabic{mycounter}}
\newtheorem{propositionappendix}[mycounter]{Proposition}
\newtheorem{remarkappendix}[mycounter]{Remark}
\newtheorem{exampleappendix}[mycounter]{Example}

\begin{propositionappendix}\label{prop1}
Let $n=m=1$ and let $\{P_t\}_{t=1}^T$ be a sequence of positive numbers generated by Equation~\eqref{eq31} and~\eqref{eq32} recursively, and assume that policy $K_t$ is stable for all $t\geq 1$.  Then there exists $\nu>0$ such that $P_t\leq \nu $ for all $t\geq 1$.
\end{propositionappendix}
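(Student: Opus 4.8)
The plan is to reduce the matrix recursion to a scalar one for the closed-loop gain and then show that the stability hypothesis forces this gain to stay uniformly away from the stability boundary. Writing $a_t = A - BK_t$, the scalar stability assumption is exactly $|a_t| < 1$ for all $t$, and solving the scalar form of~\eqref{eq31} gives
\[
P_t = \frac{\bar{Q}_t + K_t^2\bar{R}_t}{1 - a_t^2}, \qquad K_t = \frac{A - a_t}{B}.
\]
Since $|a_t| < 1$, the gain obeys $|K_t| \le (|A|+1)/|B|$, so the numerator is bounded above by a constant $N$ depending only on $|A|$, $|B|$ and the uniform upper bound $\sigma$ on $\bar{Q}_t,\bar{R}_t$ inherited from Assumption~\ref{assump}. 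Hence $P_t \le N/(1 - a_t^2)$, and the whole problem reduces to showing that $1 - a_t^2$ is bounded away from zero uniformly in $t$, i.e. that $|a_t|$ cannot approach $1$.

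The second step is to obtain a closed scalar recursion for $a_t$. Substituting the formula for $P_t$ into the scalar form of~\eqref{eq32}, namely $a_{t+1} = A - BK_{t+1} = A\bar{R}_t/(B^2 P_t + \bar{R}_t)$, and simplifying $B^2 P_t + \bar{R}_t$ using the expression above yields
\[
a_{t+1} = g_t(a_t), \qquad g_t(a) := \frac{A\,\bar{R}_t\,(1 - a^2)}{B^2\bar{Q}_t + \bar{R}_t\,(A^2 - 2Aa + 1)}.
\]
The denominator equals $B^2\bar{Q}_t + \bar{R}_t[(A-a)^2 + (1-a^2)]$, which is strictly positive for $|a| \le 1$, so $g_t$ is a well-defined smooth map on $[-1,1]$ for every admissible pair $(\bar{Q}_t,\bar{R}_t)$.

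The crux is to prove a uniform \emph{inward mapping} estimate: there is a $\delta > 0$, independent of $t$, such that $|g_t(a)| \le 1 - \delta$ for all $a \in [-1,1]$ and all $\bar{Q}_t,\bar{R}_t \in [\mu,\sigma]$. I would establish this by bounding $1 - g_t(a)$ from below when $A \ge 0$ (so $g_t \ge 0$) and $1 + g_t(a)$ from below when $A \le 0$ (so $g_t \le 0$). A direct computation gives, for $A \ge 0$,
\[
1 - g_t(a) = \frac{B^2\bar{Q}_t + \bar{R}_t\,(A a^2 - 2Aa + A^2 - A + 1)}{B^2\bar{Q}_t + \bar{R}_t\,(A^2 - 2Aa + 1)},
\]
and the quadratic in the numerator has discriminant $-4A(A-1)^2 \le 0$ with nonnegative leading coefficient, hence is nonnegative; the numerator is therefore at least $B^2\bar{Q}_t \ge B^2\mu$, while the denominator is at most a constant depending on $|A|$, $|B|$, $\sigma$. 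This forces $1 - g_t(a) \ge \delta$ uniformly, and the case $A \le 0$ is symmetric after completing the square as $-A(a+1)^2 + (A+1)^2 \ge 0$. This is the main obstacle, and it is precisely where monotonicity is sidestepped: instead of comparing iterates, one controls the one-step map directly, and the strictly positive term $B^2\bar{Q}_t$ (coming from $\bar{Q}_t \ge \mu > 0$) is what pins the image inside $[-1+\delta,\,1-\delta]$.

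Finally, I would assemble the bound. Since $|a_1| < 1$ by stability of the initial policy $K_1$, the inward estimate gives $|a_t| = |g_{t-1}(a_{t-1})| \le 1 - \delta$ for every $t \ge 2$, whence $1 - a_t^2 \ge \delta(2-\delta)$ and $P_t \le N/(\delta(2-\delta))$ for $t \ge 2$. As $P_1$ is a fixed finite positive number, taking $\nu = \max\{P_1,\, N/(\delta(2-\delta))\}$ completes the argument.
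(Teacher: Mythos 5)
Your proof is correct, and it takes a genuinely different route from the paper's. The paper keeps $P_t$ itself as the dynamical variable: it writes $P_{t+1}$ explicitly as a function of $P_t$, invokes the fact (from the authors' companion work on the Newton--Hewer map) that this function is decreasing and then increasing on the admissible domain $P_t \ge \tilde{P}_t$, and therefore bounds it by its values at the two ends, $P_t = \tilde{P}_t$ and $P_t \to \infty$, finally taking worst cases over the averaged costs. You instead pass to the closed-loop gain $a_t = A - BK_t$, derive the one-step map $a_{t+1} = g_t(a_t)$, and prove a uniform inward-mapping estimate $|g_t(a)| \le 1-\delta$ on $[-1,1]$. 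Your computations check out: the numerator quadratic $Aa^2 - 2Aa + A^2 - A + 1$ has discriminant $-4A(A-1)^2 \le 0$ with nonnegative leading coefficient when $A \ge 0$, the case $A \le 0$ follows from $-A(a+1)^2 + (A+1)^2 \ge 0$, and one may take $\delta = B^2\mu/\bigl(\sigma B^2 + \sigma(|A|+1)^2\bigr)$, so your final bound $\nu$ stands. Your route buys three things: it is more elementary (no calculus or unimodality argument is needed, only the sign of a discriminant); it never actually uses the hypothesis that $K_t$ is stable for $t \ge 2$, but re-derives it with the quantitative margin $|a_t| \le 1-\delta$, i.e., a uniform strong-stability margin for the closed-loop pole, which is exactly the quantity the paper's boundedness assumption is designed to control; and it isolates the mechanism that makes the scalar case work, namely that the strictly positive term $B^2\bar{Q}_t \ge B^2\mu$ pins the pole away from $\pm 1$ --- precisely the property whose matrix analogue fails in Remark~\ref{remark:A}. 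What the paper's route buys is a bound expressed through the extreme values of $\bar{Q}_t, \bar{R}_t$ and a formulation that directly motivates its discussion of the non-scalar obstruction. One caveat common to both proofs: $B \neq 0$ is implicitly assumed (if $B = 0$, stabilizability forces $|A| < 1$ and boundedness is immediate); also note that your uniform constants $\mu, \sigma$ come from Assumption~\ref{assump}, which the proposition statement does not restate but under which it is invoked in the paper.
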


\begin{proof}
Note that
\begin{equation*}
P_t=(A-BK_t)^2 P_t+\bar{Q}_t+K_t^2 \bar{R}_{t},
\end{equation*}
Since $K_t$ is stable using the stability of $K_1$, c.f. Lemma~\ref{stability}, we have that 
\begin{align*}
P_t&=\frac{\bar{Q}_t+K_t^2 \bar{R}_{t}}{1-(A-BK_t)^2}.
\end{align*}
Now if you consider $P_t$ as a function of $K_t$, by taking derivative of $P_t$ with respect to $K_t$ and setting it to zero, we have that 
\begin{align*}
K_t=\frac{-\bar{R}_t+A^2 \bar{R}_t-B^2 \bar{Q}_t+\sqrt{(\bar{R}_t-A^2 \bar{R}_t+B^2 \bar{Q}_t)^2+4A^2B^2 \bar{R}_t \bar{Q}_t}}{2AB \bar{R}_t}
\end{align*}
minimizes the $P_t$ and the minimum admissible $P_t$ which we denote by $\tilde{P}_t$ is given by
\begin{align*}
\tilde{P}_t=\frac{A^2 \bar{R}_t-\bar{R}_t+\bar{Q}_t B^2+\sqrt{(\bar{R}_t-A^2 \bar{R}_t-\bar{Q}_t B^2)^2+4B^2 \bar{Q}_t \bar{R}_t}}{2B^2}.
\end{align*}

Now if we write $P_{t+1}$ as a function of $P_t$ we have that
\begin{align*}
P_{t+1}&=\frac{\bar{Q}_{t+1}+K_{t+1}^2 \bar{R}_{t+1}}{1-(A-BK_{t+1})^2}\\
&=\frac{\bar{Q}_{t+1}+((B^2 P_{t}+\bar{R}_{t})^{-1}BP_{t}A)^2 \bar{R}_{t+1}}{1-(A\bar{R}_{t}(B^2 P_{t}+\bar{R}_{t})^{-1})^2}\\
&=\frac{\bar{Q}_{t+1}(B^2 P_{t}+\bar{R}_{t})^2+B^2P^2_{t}A^2 \bar{R}_{t+1}}{(B^2 P_{t}+\bar{R}_{t})^2-A^2\bar{R}^2_{t}}.
\end{align*}

By taking derivative of $P_{t+1}$ with respect to $P_t$, we conclude that for the admissible $P_t$, i.e., $P_t\geq \tilde{P}_t$, the function $P_{t+1}$ is decreasing for $P_t\leq \breve{P}_t$ and increasing for $P_t\geq \breve{P}_t$~\cite{MA-BG-TL:20}, where $\breve{P}_t$ is given by
\begin{align*}
\breve{P}_t=\frac{(A^2-1) \bar{R}_{t+1}\bar{R}_t+ B^2 \bar{Q}_{t+1} \bar{R}_t+\sqrt{((A^2-1) \bar{R}_{t+1}\bar{R}_t+ B^2 \bar{Q}_{t+1} \bar{R}_t)^2+4 B^2 \bar{Q}_{t+1}\bar{R}_{t+1}\bar{R}^2_t}}{2B^2 \bar{R}_{t+1}}.
\end{align*} 
Since $P_{t+1}$ is decreasing for $P_t\leq \breve{P}_t$ and increasing for $P_t\geq \breve{P}_t$, its maximum is achieved on the boundary. So we will check the value of $P_{t+1}$ for the point $P_t$ at infinity and at its admissible minimum $\tilde{P}_t$. Now letting $P_t$ goes to infinity, we have 
\begin{align*}
P_{t+1}=\lim_{P_t\rightarrow \infty}\frac{\bar{Q}_{t+1}(B^2 P_{t}+\bar{R}_{t})^2+B^2P^2_{t}A^2 \bar{R}_{t+1}}{(B^2 P_{t}+\bar{R}_{t})^2-A^2\bar{R}^2_{t}}=\frac{A^2}{B^2} \bar{R}_{t+1}+\bar{Q}_{t+1},
\end{align*}
and for $P_t=\tilde{P}_t$, we have
\begin{align*}
P_{t+1}=\frac{\bar{Q}_{t+1}(B^2 \tilde{P}_{t}+\bar{R}_{t})^2+B^2 \tilde{P}^2_{t}A^2 \bar{R}_{t+1}}{(B^2 \tilde{P}_{t}+\bar{R}_{t})^2-A^2\bar{R}^2_{t}}
\end{align*}
One can observe that $P_{t+1}$ as a function of $R_t$ has a similar behaviour. So for $P_{t+1}$ to achieve its maximum, $(\tilde{P}_t, R_t)$ should be minimum and $(Q_{t+1},R_{t+1})$ should be maximum. So if we let $Q_{\max}=\max\{\bar{Q}_1,\bar{Q}_2,\cdots, \bar{Q}_T\}$, $Q_{\min}=\min\{\bar{Q}_1,\bar{Q}_2,\cdots, \bar{Q}_T\}$, $R_{\max}=\max\{\bar{R}_1,\bar{R}_2,\cdots,\bar{R}_T\}$, $R_{\min}=\min\{\bar{R}_1,\bar{R}_2,\cdots,\bar{R}_T\}$, and
\[
\tilde{P}_{\min}=\frac{A^2 R_{\min}-R_{\min}+Q_{\min} B^2+\sqrt{(R_{\min}-A^2 R_{\min}-Q_{\min} B^2)^2+4B^2 Q_{\min} R_{\min}}}{2B^2},
\]
we obtain that for all $t>0$
\begin{align*}
P_{t}\leq \max\Big\{\frac{A^2}{B^2} R_{\max}+Q_{\max},\frac{Q_{\max}(B^2 \tilde{P}_{\min}+R_{\min})^2+B^2 \tilde{P}^2_{\min}A^2 R_{\max}}{(B^2 \tilde{P}_{\min}+R_{\min})^2-A^2 R^2_{\min}}\Big\}
\end{align*}
\qed
\end{proof}

\begin{figure*}[htb!]
  \centering
    {
\includegraphics[scale=0.6]{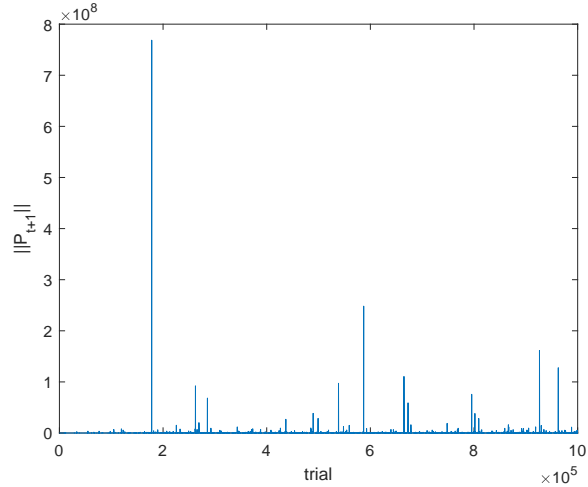}
\caption{This graph shows the norm of $P_{t+1}$ for different values of $P_t \succeq P^*$. $P_t$ can be near the boundary that makes $K_{t+1}$ unstable, and hence $P_{t+1}$ gets very large.}
\label{Example1}
}
\end{figure*}

We illustrate in the next remark as to why the argument that we have used above cannot be readily extended to non-scalar cases. 
\begin{remarkappendix}\label{remark:A}
{\em
The procedure that we have used above to prove boundedness of $ P_t $ relied on studying the evolutions of $ P_{t+1} $ as a function of $ P_t $. When these quantities are not scalars, one naturally aims to consider the norm of $ P_{t+1} $ as a function of the norm of $ P_t $. However, an example can be constructed where $P_{t+1}$ as a function of $P_t$ becomes unbounded as $P_t$ approaches the boundary of the set positive-definite matrices that make $K_{t+1}$ unstable. This does not happen in the scalar case since this boundary is smaller than $\tilde{P}_t$, the minimum achievable $P_t$. Figure~\ref{Example1} depicts the norm of $P_{t+1}$ for different trials of selecting $P_t$. 
For each trial, the $P_t$ 
is chosen as $P_t = P^* + \Omega$, where $P^*$ is the minimum achievable $P_t$ for a stable matrix $K_t$, and $\Omega$ is a positive definite matrix. It can be seen that the norm of $P_{t+1}$ for some trials gets very large. For example, for $P_t$
\begin{align*}
P_t=
\begin{pmatrix*}[r]
	18714& \quad -312& \quad 291& \\
   -312& \quad 82149& \quad -144& \\
    291& \quad -144& \quad 14220& 
\end{pmatrix*},
\end{align*}
the matrix $A-BK_{t+1}$ has the eigenvalues
\begin{align*}
\lambda(A-BK_{t+1})=
\begin{pmatrix}
  -0.999996 \\
   0.002971 \\
  -0.000047
\end{pmatrix},
\end{align*}
and the first eigenvalue that is near $1$, which makes the norm of $P_{t+1}$ around the order of $7.7\times 10^8$.
However, in several simulations of online Riccati algorithm, we observed that changes in $P_t$ as a result of changes in bounded $\bar{Q}_t$ and $\bar{R}$ do not make $K_{t+1}$ to get close to the unstable policy boundary, and hence $P_{t+1}$ cannot get unbounded. We will show this behaviour in the following experiment.}
\end{remarkappendix}
\begin{exampleappendix}\label{example:A}{\em
In order to observe the behaviour of matrices $P_t$ over time, a linear discrete-time control system with $n=7$ states and $m=5$ control actions is considered, where the matrices $(A,B)$ are fixed. 
\begin{figure*}[htb!]
  \centering
    {
\includegraphics[scale=0.6]{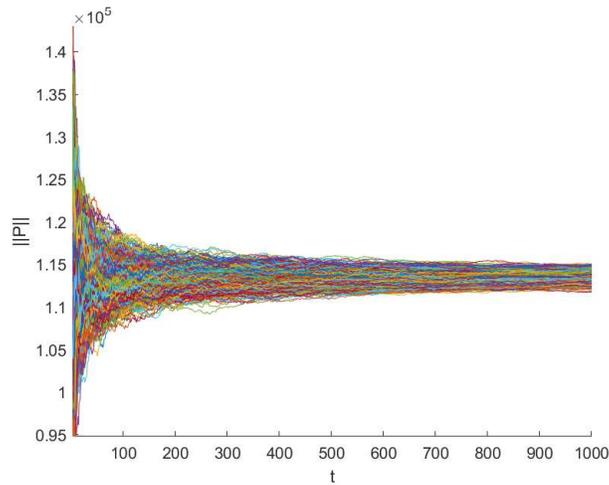}
\caption{The norm of $P_t$ over time for 1000 trials is shown. For each trial, a sequence of matrices $Q_t$ and $R_t$ with Wishart distribution is generated and the sequence $P_t$ is generated using the online Riccati algorithm.}
\label{Example2}
}
\end{figure*}
We used several trials, where for each trial a sequence of positive definite random matrices $Q_t$ and $R_t$ with Wishart distribution is generated and we used the online Riccati algorithm with different initialization $K_1$ to generate the sequence $P_t$. Figure~\ref{Example1} shows the graph of the norm of $P_t$ over time for each trial. Clearly, $P_t$ stays bounded. Similar property is observed in all our simulation studies. Understanding why this boundedness occurs and if this is generally true is an important open problem, and appears to be difficult in light of the previous remark.}
\end{exampleappendix}

\end{document}